\def\al{\alpha}
\def\be{\beta}
\def\ga{\gamma}
\def\de{\delta}
\def\Om{\Omega}
\def\Im{\operatorname{Im}}
\newcommand{\rep}{\operatorname{rep}}
\newcommand{\bbR}{{\mathbb R}}
\def\dsm#1,#2..#3{\bigoplus_{{#1}={#2}}^{#3}}
\newcommand{\id}{1\kern-.25em{\text{{\rm l}}}} 
\newcommand{\isoto}{\ \raise.8ex\hbox{$^{\sim}$}\kern-.7em\hbox{$\to$}\ } 
\newcommand{\bg}{%
\family{cmr}\size{20}{12pt}\selectfont}
\newcommand{\bigzerou}{%
\smash{\lower1.7ex\hbox{\bg 0}}}
\renewcommand{\k}{\Bbbk}
\newtheorem{definition}{Definition}[section] 
\newtheorem{thm}[definition]{Theorem} 
\newtheorem{lem}[definition]{Lemma} 
\newtheorem{prop}[definition]{Proposition} 
\def\ps@pprintTitle{%
\let\@oddhead\@empty
\let\@evenhead\@empty
\let\@oddfoot\@empty
\let\@evenfoot\@oddfoot
}
\def\ps@pprintTitle{%
\let\@oddhead\@empty
\let\@evenhead\@empty
\def\@oddfoot{\reset@font\hfil\thepage\hfil}
\let\@evenfoot\@oddfoot
}
\begin{document}

\begin{frontmatter}



\title{Double Flag Varieties and Representations of Quivers}


\author{Hiroki Homma}

\address{$homma.hiroki.249$@$s.kyushu$-$u.ac.jp$}

\begin{abstract}

We gave a classification of $P$ and $Q$ with a finite number of $K$-orbits of a double flag variety $G/P\times K/Q$ 
for a symmetric pair $(G, K)$ when $G=GL_{m+n}$ and $K=GL_{m}\times GL_{n}$, 
and a description of $K$-orbits when the number of $K$-orbits of $G/P\times K/Q$ is finite. 
We solved the problem by providing a correspondence between the $K$-orbits and the quiver representations.
\end{abstract}

\begin{keyword}
Double flag variety, Symmetric pairs, Reductive group, Representation of quiver, Krull-Schmidt theorem, Tits quadratic form.

\end{keyword}

\end{frontmatter}


\section{Introduction}
For a reductive group $G$ and its symmetric subgroup $K$, 
the direct product $G/P \times K/Q$ of partial flag varieties $G/P$ and $K/Q$ is called a double-flag variety for a symmetric pair $(G, K)$, 
and the diagonal action of $K$ on $G/P \times K/Q$ is an important object applied to branching rules of representations, e.g. \cite{km}. 
In particular, two problems are as follows: \\
$(1)$What are the pairs $G$, $K$, $P$, and $Q$ such that there are only finitely many $K$-orbits on $G/P \times K/Q$ ?\\
$(2)$Can we describe the $K$-orbits on $G/P \times K/Q$ when there are only finitely many $K$-orbits ?\\
We solved this problem by corresponding the $K$-orbits to the quiver representations.

The problem of the finiteness of the number of $G$-orbits of a triple-flag variety in the case of a group $G$ was initiated by Magyar-Weyman-Zelevinsky\cite{mwz1999}, \cite{mwz2000}, 
then they gave the finiteness determination condition for $G$-orbits and the description of their $G$-orbits in the case of G=GL, Sp. 
Later, He-Nishiyama-Ochiai-Oshima\cite{hnoo} generalized the setting to double-flag varieties $G/P \times K/Q$ for symmetric pairs, 
and gave a classification of $P$ and $Q$ such that the number of $K$-orbits is finite in the special case where $P$ or $Q$ is a Borel subgroup. 
However, many problems remain unsolved, such as the complete classification of $P$ and $Q$ such that the number of $K$-orbits is finite when $P$ and $Q$ are arbitrary, 
and the description of the orbit decomposition. 

In this paper, 
we classified $P$ and $Q$ such that the number of $K$-orbits is finite for $G=GL_{m+n}$ and $K=GL_{m}\times GL_{n}$, and described the $K$-orbits in these cases. 
Also, while previous research used the root system to study the problem, in this paper, we used the method of the representation theory of quivers to solve the problem.

The joint flag variety introduced in this paper is a dense open sub-variety of the triple-flag variety with the action of $G$. 
For some of them, the number of $G$-orbits in the joint flag variety is finite, while the number of $G$-orbits in the triple flag variety is infinite. 
The construction of this joint-flag variety also uses ideas from the representation theory of quiver. 
This discovery was made possible by research that spanned both fields.

In this paper, the coefficient field $\k$ is an algebraically closed field with characteristic zero. 

In Section 2, we introduce a joint-flag variety ${\rm Jl}_{\bf d}(V)$ into the triple-flag variety ${\rm Fl}_{\bf d}(V)$ of $GL(V)$. We now consider the $GL(V)$-action on ${\rm Jl}_{\bf d}(V)$. In this section, we give the condition for ${\rm Jl}_{\bf d}(V)$ to have a finite number of $GL(V)$-orbits. Furthermore, we describe the $GL(V)$-orbits when the number of $GL(V)$-orbits is finite.

In Section 3, we give a correspondence between the $GL(V_1)$-orbits of the joint flag variety ${\rm Jl}_{\bf d}(V_1)$ and the $GL(V_2)\times GL(V_3)$-orbits of the double flag variety ${\rm Dl}_{\bf d}(V_2\bigoplus V_3)$ for a symmetric pair ($GL(V_1)$, $GL(V_2)\times GL(V_3)$). Then, from this correspondence and the results of Section 2, we obtain the following two results.\\
$(1)$ The decision condition of ${\rm Dl}_{\bf d}(V_2\bigoplus V_3)$ for the number of $GL(V_2)\times GL(V_3)$-orbits of ${\rm Dl}_{\bf d}(V_2\bigoplus V_3)$ to be finite is given.\\
$(2)$ Describe the K-orbits of ${\rm Dl}_{\bf d}(V_2\bigoplus V_3)$ when the number of $GL(V_2)\times GL(V_3)$-orbits of ${\rm Dl}_{\bf d}(V_2\bigoplus V_3)$ is finite.

\section{Joint Flag Varieties}
Let an integer partition that allows ${\bf a}=(a_1, \cdots ,a_p)$ to contain a zero part. 
We write $|{\bf a}|:=a_1+\cdots +a_p$, $||{\bf a}||^2:={a_1}^2+\cdots +{a_p}^2$, ${\bf a'}:=(a_1, \cdots , a_{p-1})$, 
$\ell ({\bf a}):=p$ called the length of {\bf a}, and $(a^p):=\underbrace{(a, \cdots ,a)}_{p\; \text{parts}}$. 
We denote by ${\rm Fl}_{\bf a}(V)$ the flag variety consisting of all 
flags $(0=A_0\subset A_1\subset \cdots \subset A_{p-1}\subset A_p=V)$ such that 
\[
\dim{A_i}-\dim{A_{i-1}}=a_i \, \, \, \, \, (i=1, \cdots , p).
\]

For any positive integer $p, q, r$, let $Q_{r, p, q}$ be a quiver of the following form:
\[
\xymatrix{
 & & & & \be_{p-1} \ar[dl] & \cdots \ar[l] & \be_1 \ar[l] \\
 \al_1  \ar[r]  & \cdots \ar[r] & \al_{r-1} \ar[r] & \de  \\
 & & & & \ga_{q-1} \ar[ul] & \cdots \ar[l] & \ga_1 \ar[l]  
}
\]
and $\rep{\k Q_{r, p, q}}$ be a category of finite-dimensional representations of $Q_{r, p, q}$. 

\begin{definition}
Define the {\bf Joint flag category} ${\mathscr{J}}_{r, p, q}$ as follows$:$ \\
${\mathscr{J}}_{r, p, q}$ is a full sub-category of $\rep{\k Q_{r, p, q}}$, whose objects are given in the following forms of $(V, A, B, C)\in \rep{\k Q_{r, p, q}}$$:$
\\
\[
\xymatrix{
 & & & & B_{p-1} \ar[dl]_\alpha & \cdots \ar[l] & B_1 \ar[l] \\
 A_1  \ar[r]  & \cdots \ar[r] & A_{r-1} \ar[r] & V  \\
 & & & & C_{q-1} \ar[ul]^\beta & \cdots \ar[l] & C_1 \ar[l]
}
\]
 where all arrows are injections, and $\Im{\alpha}\bigoplus\Im{\beta}=V$. 
\end{definition}

For any positive integer $p, q, r$, let ${{\Lambda}_{r, p, q}}^{J}$ denote  the additive semi-group of all triples of partitions $({\bf a, b, c})$ such that 
$(\ell ({\bf a}), \ell ({\bf b}), \ell ({\bf c}))=(r, p, q)$, and $|{\bf a}|=|{\bf b'}|+|{\bf c'}|$. 
When there is no risk of ambiguity, we drop the subscripts and write ${\Lambda}^{J}, {\mathscr{J}}$. 

Then by embedding ${\mathscr{J}}_{r, p, q}$ in $\rep{\k Q_{r, p, q}}$, 
the object of ${\mathscr{J}}_{r, p, q}$ is directly decomposed on $\rep{\k Q_{r, p, q}}$ except for the order and the isomorphism, 
by applying Krull-Schmidt Theorem\cite{krull}. 
It's also easy to check. The fact that this decomposition is on ${\mathscr{J}}_{r, p, q}$.

\begin{definition}
For any ${\bf d}\in {\Lambda}^{J}$, we define ${\rm Jl}_{\bf d}(V)$ as a sub-variety of ${\rm Fl}_{\bf d}(V)$ consisting of the whole of $(A, B, C)\in {\rm Fl}_{\bf d}(V)$ satisfying $(V, A, B, C)\in \mathscr{J}$. 
At this time ${\rm Jl}_{\bf d}(V)$ is called Joint flag variety.
\end{definition}

\begin{definition}
Let us say that $F\in {\mathscr{J}}$ is represented by ${\bf d}\in {\Lambda}^{J}$ when there exists a $(A, B, C)\in {\rm Jl}_{\bf d}(V)$ such that $F\cong (V, A, B, C)$. 
In this case, we denote $F$ by $F_{\bf d}$.
\end{definition} 

Then the following are equivalent for $(A, B, C), (A', B', C')\in {\rm Jl}_{\bf d}(V)$: 
\begin{align*}
&(1)\; GL(V)(A, B, C)=GL(V)(A', B', C') \\
&(2)\; (V, A, B, C)\cong (V, A', B', C')\; on\; {\mathscr{J}}
\end{align*}

Therefore we get the following:\\
(1) $\# \{ GL(V)(A, B, C)\, |\, (A, B, C)\in {\rm Jl}_{\bf d}(V)\} =\# \{ $isomorphism classes of $F_{\bf d}\in {\mathscr{J}}\} $.\\
(2) A concrete description of the orbit decomposition of ${\rm Jl}_{\bf d}(V)$ can be given by finding all the isomorphism classes of $F_{\bf d}$. \\

Now we can translate the problem of the number and description of $GL(V)$-orbits of ${\rm Jl}_{\bf d}(V)$ into the problem of finding the isomorphism classes of the representation of the quiver.

\begin{definition}
The {\bf Tits quadratic form} is define to be the form 
\[
Q{\bf (a, b, c)}=\dim{GL(V)}-\dim{{\rm Fl}_{\bf a}(V)}-\dim{{\rm Fl}_{\bf b}(V)}-\dim{{\rm Fl}_{\bf c}(V)}, 
\]
where ${\bf (a, b, c)}\in {\Lambda}^{J}$. \\
An easy calculation shows that 
\[
Q{\bf (a, b, c)}=(||{\bf a}||^2+||{\bf b}||^2+||{\bf c}||^2-(\dim{V})^2)/2. 
\]
\end{definition}

We get the following proposition immediately from (\cite{mwz1999}, Proposition 3.1.).

\begin{prop} \label{sec:2.1} 
Suppose ${\bf d}\in {\Lambda}^{J}$ is the dimension vector of an indecomposable object of $\mathscr{F}$ with $Q({\bf d})\geq 1$.
Then $Q({\bf d})=1$, and there is a unique isomorphism class ${\cal{I}}_{\bf d}$ of indecomposable objects with the 
dimension vector {\bf d}.
\end{prop}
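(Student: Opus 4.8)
The plan is to transport the statement into the representation theory of the star quiver $Q_{r,p,q}$ and then quote the root-theoretic dichotomy that underlies (\cite{mwz1999}, Proposition 3.1). First I would make the dictionary between ${\mathscr{J}}_{r,p,q}$ and $\rep{\k Q_{r,p,q}}$ explicit at the level of dimension vectors: a triple $({\bf a},{\bf b},{\bf c})\in{\Lambda}^{J}$ determines the dimension vector ${\bf d}$ of a representation of $Q_{r,p,q}$ by recording the partial sums of the parts along each arm (the numbers $\dim A_i$, $\dim B_i$, $\dim C_i$) and placing $\dim V=|{\bf a}|$ at the central vertex $\de$. By the Krull--Schmidt remark recorded after the definition of ${\mathscr{J}}_{r,p,q}$, the indecomposable summands of an object of ${\mathscr{J}}$, formed in $\rep{\k Q_{r,p,q}}$, again lie in ${\mathscr{J}}$; hence an indecomposable object of ${\mathscr{J}}$ is in particular an indecomposable representation of $\k Q_{r,p,q}$, and ${\bf d}$ is the dimension vector of such an indecomposable.

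Next I would verify that the geometric Tits form $Q({\bf a},{\bf b},{\bf c})$ coincides with the Euler quadratic form $q({\bf d})=\sum_{v}d_v^2-\sum_{i\to j}d_id_j$ of $Q_{r,p,q}$ under this dictionary. Expanding each flag-variety dimension as $\dim{\rm Fl}_{\bf a}(V)=\tfrac12((\dim V)^2-||{\bf a}||^2)$ and summing reproduces the closed form $(||{\bf a}||^2+||{\bf b}||^2+||{\bf c}||^2-(\dim V)^2)/2$ already recorded in the definition, and a direct comparison of the quiver Euler form (using $d_\de=\dim V$ together with the constraint $|{\bf a}|=|{\bf b'}|+|{\bf c'}|$) shows that the two quantities agree. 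After these identifications the hypotheses and conclusion of the proposition are literally those of (\cite{mwz1999}, Proposition 3.1).

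It then remains to invoke the dichotomy for the roots of $Q_{r,p,q}$, which is the substance of that proposition and ultimately of Kac's theorem: the dimension vector of an indecomposable representation is a positive root; a real root satisfies $q=1$ while an imaginary root satisfies $q\le 0$, so no root has $q\ge 2$. Therefore the hypothesis $Q({\bf d})\ge 1$ forces $Q({\bf d})=1$ and ${\bf d}$ to be a real root, and a real root carries exactly one indecomposable up to isomorphism (this is the second half of the same result); since the indecomposable we started with lies in ${\mathscr{J}}$, this unique class, which we name ${\cal{I}}_{\bf d}$, is the representative sought, and the argument is complete.

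The main obstacle I anticipate is not the root dichotomy, which is imported wholesale, but the term-by-term matching of the geometric Tits form $Q$ with the combinatorial Euler form $q$, since it is precisely this identification that lets the conclusion follow ``immediately'' from (\cite{mwz1999}, Proposition 3.1); the bookkeeping must use the defining constraint $|{\bf a}|=|{\bf b'}|+|{\bf c'}|$ and the placement $d_\de=\dim V$. A secondary point to confirm is that restricting to the joint-flag subcategory ${\mathscr{J}}$, with its extra transversality condition $\Im{\al}\ds\Im{\be}=V$, does not alter the notion of indecomposability inherited from $\rep{\k Q_{r,p,q}}$ — the fact flagged as ``easy to check'' above.
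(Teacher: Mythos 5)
Your proposal is correct and is essentially the paper's argument: the paper offers no proof beyond the citation of (\cite{mwz1999}, Proposition 3.1), and what you have written out — the identification of the geometric Tits form with the Euler form of the star quiver via the telescoping of the partial sums along each arm, followed by Kac's real/imaginary root dichotomy — is precisely the content of that cited proposition. The only cosmetic point is that the statement concerns $\mathscr{F}$ rather than $\mathscr{J}$, but since both are full subcategories of $\rep{\k Q_{r,p,q}}$ closed under direct summands, your argument applies verbatim.
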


\begin{lem}
$\dim{{\rm Fl}_{\bf d}(V)}=\dim{{\rm Jl}_{\bf d}(V)}$
\end{lem}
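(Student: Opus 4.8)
The plan is to realize ${\rm Jl}_{\bf d}(V)$ as a nonempty open subvariety of ${\rm Fl}_{\bf d}(V)$. Recall that ${\rm Fl}_{\bf d}(V)={\rm Fl}_{\bf a}(V)\times {\rm Fl}_{\bf b}(V)\times {\rm Fl}_{\bf c}(V)$ is a product of partial flag varieties, each a connected homogeneous space $GL(V)/P$ and hence irreducible; as a product of irreducible varieties over an algebraically closed field it is itself irreducible. Since a nonempty open subvariety of an irreducible variety is dense and of full dimension, the asserted equality will follow immediately once openness and nonemptiness are established.

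First I would pin down the defining condition. Writing ${\bf d}=({\bf a,b,c})$, any point $(A,B,C)\in {\rm Fl}_{\bf d}(V)$ already has all the quiver arrows injective, since these are flag inclusions. Thus the only extra requirement imposed by membership in $\mathscr{J}$ is $\Im\alpha\oplus\Im\beta=V$, i.e. $B_{p-1}\oplus C_{q-1}=V$. Because ${\bf d}\in{\Lambda}^{J}$ forces $\dim B_{p-1}+\dim C_{q-1}=|{\bf b'}|+|{\bf c'}|=|{\bf a}|=\dim V$, this direct-sum condition is equivalent to the transversality condition $B_{p-1}\cap C_{q-1}=0$.

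For openness I would consider the projection $\pi\colon {\rm Fl}_{\bf d}(V)\to \Gr(|{\bf b'}|,V)\times \Gr(|{\bf c'}|,V)$ sending $(A,B,C)$ to $(B_{p-1},C_{q-1})$. The pairs of subspaces of complementary dimension that meet nontrivially form a closed incidence locus, so its complement is open, and ${\rm Jl}_{\bf d}(V)=\pi^{-1}(\text{that open set})$ is open. For nonemptiness I would decompose $V=U\oplus W$ with $\dim U=|{\bf b'}|$ and $\dim W=|{\bf c'}|$, which is possible exactly because $|{\bf b'}|+|{\bf c'}|=|{\bf a}|=\dim V$; then I would take any flag of type ${\bf b}$ whose penultimate term is $U$, any flag of type ${\bf c}$ whose penultimate term is $W$, and an arbitrary flag of type ${\bf a}$. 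This triple lies in ${\rm Jl}_{\bf d}(V)$.

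The argument is essentially formal once these two points are settled, and the only genuine input is the numerical constraint $|{\bf a}|=|{\bf b'}|+|{\bf c'}|$ built into ${\Lambda}^{J}$: it is precisely what converts the direct-sum requirement into an open transversality condition and what permits the explicit construction of a point. The one step needing a little care is checking that the transversal locus is genuinely open rather than merely constructible, which amounts to the standard fact that nonvanishing of the relevant determinant (equivalently, nondegeneracy of the addition map on the tautological bundles) is an open condition. I expect this, rather than anything in the flag combinatorics, to be the only real obstacle.
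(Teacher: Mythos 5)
Your proposal is correct and follows essentially the same route as the paper, which simply observes that ${\rm Jl}_{\bf d}(V)$ is open in the connected variety ${\rm Fl}_{\bf d}(V)$. You additionally supply the two verifications the paper leaves implicit — that the transversality condition $B_{p-1}\cap C_{q-1}=0$ cuts out an open locus, and that this locus is nonempty (which is genuinely needed, since an empty open set would not have full dimension) — so your write-up is a more complete version of the same argument.
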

\begin{proof}
The proposition is clear from the following. ${\rm Fl}_{\bf d}(V)$ is connected and ${\rm Jl}_{\bf d}(V)$ is an open set of ${\rm Fl}_{\bf d}(V)$.
\end{proof}

\begin{definition}
A dimension vector ${\bf d=(a, b, c)}\in {\Lambda}^{J}$ is called finite type in $\mathscr{J}$ if 
the number of ${GL}_{|{\bf a}|}$-orbits in ${\rm Jl}_{\bf d}(V)$ is finite.
\end{definition}

\begin{definition}
A dimension vector ${\bf d=(a, b, c)}\in {\Lambda}^{J}$ is called finite type in $\mathscr{F}$ if 
the number of ${GL}_{|{\bf a}|}$-orbits in ${\rm Fl}_{\bf d}(V)$ is finite.
\end{definition}

\begin{prop}\label{sec:3.5}
If {\bf d} is a dimension vector of finite type in $\mathscr{J}$ and satisfies $Q({\bf d})=1$ then there exists a Schur indecomposable 
${I}_{\bf d}$ with the dimension vector {\bf d}.
\end{prop}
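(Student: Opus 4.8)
The plan is to exploit the standard dictionary between $GL(V)$-orbits on the flag variety and isomorphism classes of objects of $\mathscr{J}$, together with the dimension count furnished by the Tits form, and to locate the desired Schur object as the representation attached to the \emph{dense} orbit.

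First I would record an orbit--dimension formula. For a point $x=(A,B,C)\in{\rm Jl}_{\mathbf{d}}(V)$ write $M_x=(V,A,B,C)\in\mathscr{J}$ for the associated object. Because every arm map is an inclusion, a quiver endomorphism of $M_x$ is determined by its central component $\phi_\delta\colon V\to V$: the commutativity constraints propagate down each arm and force precisely that $\phi_\delta$ preserve each of the three flags, the arm components then being the restrictions of $\phi_\delta$. Hence $\End(M_x)$ is identified with the space of flag-preserving endomorphisms of $V$, which is the Lie algebra of $\operatorname{Stab}_{GL(V)}(x)$, so that $\dim\bigl(GL(V)\cdot x\bigr)=\dim GL(V)-\dim\End(M_x)$. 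Combining this with $\dim{\rm Jl}_{\mathbf{d}}(V)=\dim{\rm Fl}_{\mathbf{d}}(V)=\dim GL(V)-Q(\mathbf{d})$ (the displayed Lemma, together with the definition of the Tits form and the fact that ${\rm Fl}_{\mathbf{d}}(V)$ is the product of the three partial flag varieties) yields the key identity
\[
\operatorname{codim}_{{\rm Jl}_{\mathbf{d}}(V)}\bigl(GL(V)\cdot x\bigr)=\dim\End(M_x)-Q(\mathbf{d}).
\]

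Next I would produce the dense orbit. Since ${\rm Fl}_{\mathbf{d}}(V)$ is a product of irreducible partial flag varieties it is irreducible, and ${\rm Jl}_{\mathbf{d}}(V)$ is a nonempty open subvariety of it---nonempty exactly because $\mathbf{d}\in\Lambda^{J}$ guarantees a configuration with $\Im\alpha\oplus\Im\beta=V$---so ${\rm Jl}_{\mathbf{d}}(V)$ is itself irreducible. The hypothesis that $\mathbf{d}$ is of finite type in $\mathscr{J}$ means ${\rm Jl}_{\mathbf{d}}(V)$ is a finite union of locally closed orbits; writing it as the finite union of their closures and invoking irreducibility, one of these closures must be all of ${\rm Jl}_{\mathbf{d}}(V)$, so the corresponding orbit $\mathcal{O}$ is dense and therefore open, of codimension $0$. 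Taking $x$ in $\mathcal{O}$ and substituting $\operatorname{codim}(\mathcal{O})=0$ and $Q(\mathbf{d})=1$ into the key identity gives $\dim\End(M_x)=1$; since $\End(M_x)$ always contains the scalars, every endomorphism is a scalar, i.e. $\End(M_x)=\k$. Thus $M_x$ is a brick, in particular indecomposable, and it lies in $\mathscr{J}$ with dimension vector $\mathbf{d}$ by construction, so setting $I_{\mathbf{d}}:=M_x$ gives the required Schur indecomposable.

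The step I expect to require the most care is the orbit--dimension formula: one must justify cleanly that the (scheme-theoretic) stabilizer has Lie algebra equal to $\End(M_x)$---equivalently that the arm components of any endomorphism are forced to be restrictions of $\phi_\delta$---and that the orbit-dimension theorem applies so that $\dim\bigl(GL(V)\cdot x\bigr)=\dim GL(V)-\dim\End(M_x)$ holds on the nose (here the hypotheses that $\k$ is algebraically closed of characteristic zero ensure smoothness of orbits and reducedness of stabilizers). Everything else is then a short consequence of the irreducibility of ${\rm Jl}_{\mathbf{d}}(V)$ and the value $Q(\mathbf{d})=1$.
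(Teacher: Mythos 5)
Your proposal is correct and follows essentially the same route as the paper: both locate the dense open orbit guaranteed by finite type and irreducibility of ${\rm Jl}_{\mathbf{d}}(V)$, identify the endomorphism algebra of the corresponding object with the (Lie algebra of the) stabilizer, and conclude $\dim\End = \dim GL(V)-\dim{\rm Jl}_{\mathbf{d}}(V)=Q(\mathbf{d})=1$. You merely spell out the steps (the orbit--dimension formula and the existence of the open orbit) that the paper states without elaboration.
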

\begin{proof}
Since ${\bf d}$ is of finite type, ${\rm Jl}_{\bf d}(V)$ has a Zariski open orbit $\Om$.
So take the isomorphism class ${I}_{\bf d}$ of ${\rm Jl}_{\bf d}(V)$ corresponding to $\Om$.
Then take any quiver representative $F$ corresponding to ${I}_{\bf d}$.
Then we get the following equation.

\begin{equation*}
\langle {I}_{\bf d}, {I}_{\bf d} \rangle =\dim{{Stab}_{GL(V)}(F)}=\dim{GL(V)}-\dim{{\rm Jl}_{\bf d}(V)}=Q({\bf d})=1.
\end{equation*}

Hence ${I}_{\bf d}$ becomes a Schur indecomposable.
\end{proof}

\begin{definition}
We say that a non-zero dimension vector ${\bf d'}\in {\Lambda}^{J}$ is a summand of ${\bf d}\in {\Lambda}^{J}$ on $\mathscr{J}$ 
if ${\bf d-d'}\in {\Lambda}^{J}$. 
\end{definition}

\begin{definition}
For a partition ${\bf a}$, we denote by ${\bf a}^{+}$ the partition obtained from ${\bf a}$ by removing all zero parts and rearranging the non-zero parts in weakly decreasing order.
\end{definition}

For objects $F$ and $F'$ of ${\rm Jl}_{\bf d}(V)$, if $F$ and $F'$ are isomorphic as a representation of quiver, then $F$ and $F'$ have the 
same $GL(V)$-orbits, which leads to the following proposition.

\begin{prop}\label{sec:3.6} 
${\bf d}$ is a dimension vector of finite type in $\mathscr{J}$ if and only if any summand ${\bf d'}$ of ${\bf d}$ on $\mathscr{J}$ 
satisfies $Q({\bf d'})\geq 1$.
\end{prop}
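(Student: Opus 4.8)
The plan is to translate everything through the orbit--isomorphism-class dictionary established above, so that ``${\bf d}$ is of finite type in $\mathscr{J}$'' reads as ``$\mathscr{J}$ has only finitely many isomorphism classes of objects of dimension vector ${\bf d}$''. I would then prove the two implications separately, using throughout that $\mathscr{J}$ is closed under direct sums and that the Krull--Schmidt decomposition of any object of $\mathscr{J}$ already takes place inside $\mathscr{J}$.

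For the forward implication I would first show that finite type passes to summands. Given a summand ${\bf d'}$, fix an object $F_0''$ of $\mathscr{J}$ realizing the complementary vector ${\bf d}-{\bf d'}\in{\Lambda}^J$ (such an object exists because every vector of ${\Lambda}^J$ is realized by a point of the corresponding joint flag variety, that variety being nonempty). Then $F'\mapsto F'\oplus F_0''$ sends isomorphism classes of dimension ${\bf d'}$ injectively into those of dimension ${\bf d}$, by uniqueness in Krull--Schmidt; hence finitely many classes of dimension ${\bf d}$ forces finitely many of dimension ${\bf d'}$. It then suffices to prove that a finite-type vector ${\bf e}$ has $Q({\bf e})\geq 1$: since $GL(V)$ is connected and ${\rm Jl}_{\bf e}(V)$ is open and dense in the irreducible variety ${\rm Fl}_{\bf e}(V)$, finiteness of the orbit set produces a dense orbit $\Omega$, and for $F\in\Omega$ the orbit--stabilizer relation together with the dimension lemma give $\dim{\rm Stab}_{GL(V)}(F)=\dim GL(V)-\dim{\rm Jl}_{\bf e}(V)=Q({\bf e})$; as the scalar matrices $\k^{\times}\cdot 1$ always lie in this stabilizer, $Q({\bf e})\geq 1$. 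Applying this to each summand (which is again finite type) yields $Q({\bf d'})\geq 1$.

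For the reverse implication, assume every summand of ${\bf d}$ has $Q\geq 1$, and let $F$ be any object of $\mathscr{J}$ of dimension vector ${\bf d}$. Decompose $F\cong\bigoplus_i F_i$ into indecomposables of $\mathscr{J}$ and set ${\bf e}_i=\udim F_i$. Since ${\bf d}-{\bf e}_i$ is the dimension vector of $\bigoplus_{j\neq i}F_j\in\mathscr{J}$, each ${\bf e}_i$ is a non-zero summand of ${\bf d}$, so $Q({\bf e}_i)\geq 1$; Proposition~\ref{sec:2.1} then forces $Q({\bf e}_i)=1$ and makes the indecomposable of dimension ${\bf e}_i$ unique up to isomorphism, namely $\mathcal{I}_{{\bf e}_i}$. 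Thus $F\cong\bigoplus_i \mathcal{I}_{{\bf e}_i}^{\,m_i}$ is determined up to isomorphism by the multiplicities $(m_i)$ subject to $\sum_i m_i{\bf e}_i={\bf d}$. Because ${\bf d}$ has only finitely many summand dimension vectors and each multiplicity is bounded by ${\bf d}$, there are only finitely many such isomorphism classes, so ${\bf d}$ is of finite type.

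I expect the reverse implication to be the crux. Its soft-looking combinatorics secretly rests on two structural facts that must be handled with care: that a Krull--Schmidt summand of an object of $\mathscr{J}$ is again an object of $\mathscr{J}$ (so that its dimension vector is genuinely a summand of ${\bf d}$ in ${\Lambda}^J$), and the rigidity supplied by Proposition~\ref{sec:2.1}, which upgrades $Q({\bf e}_i)\geq 1$ to $Q({\bf e}_i)=1$ together with uniqueness of the indecomposable. By contrast the forward implication is comparatively routine, since its only geometric input---the dense-orbit and stabilizer-dimension count---is essentially the computation already performed in Proposition~\ref{sec:3.5}.
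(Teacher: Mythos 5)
Your proposal is correct and follows essentially the same route as the paper's (much terser) proof: finite type passes to summands and forces $Q\geq 1$ via the dense orbit and the scalar stabilizer, while the converse combines the Krull--Schmidt decomposition inside $\mathscr{J}$ with Proposition~\ref{sec:2.1} to bound the isomorphism classes by multiplicity data. The details you supply -- the $F'\mapsto F'\oplus F_0''$ cancellation argument and the explicit stabilizer-dimension count -- are exactly the justifications the paper leaves implicit (the latter being the computation already carried out in Proposition~\ref{sec:3.5}).
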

\begin{proof}
First, suppose {\bf d} is of finite type in $\mathscr{J}$.
Then any summand ${\bf d'}$ of {\bf d} on $\mathscr{J}$ is of finite type. 
Therefore, satisfy $Q({\bf d'})\geq 1$.
Conversely, any summand ${\bf d'}$ of {\bf d} on $\mathscr{J}$ satisfies $Q({\bf d'})\geq 1$. 
Any object $F$ of ${\mathscr{J}}$ has a unique indecomposable decomposition. 
In this case, from the assumptions and Proposition \ref{sec:2.1}, there are a finite number of isomorphism classes of indecomposable 
summands of $F$.
Therefore, {\bf d} is of finite type.
\end{proof}

For a dimension vector ${\bf d=(a, b, c)}\in {\Lambda}^{J}$, we denote by 
\[
{\bf d^{+}}:=({\bf a^{+}, ({b'}^{+}, |c'|), ({c'}^{+}, |b'|})).
\]
\begin{thm}\label{sec:3.8}
${\bf d=(a, b, c)}\in {\Lambda}^{J}$ satisfies either ${\bf d}$ is of finite type in $\mathscr{J}$ 
or ${\bf d}$ has at least one summand ${\bf d'}$ on $\mathscr{J}$ where ${\bf {d'}^{+}}$ is one of the following$:$\\
$((2^3), (1^3, 3), (1^3, 3))$, $((3^3), (2^2, 5), (1^5, 4))$, $((3^3), (1^5, 4), (2^2, 5))$, \\
$((1^4), (1^2, 2), (1^2, 2))$, $((2^4), (3, 5), (1^5, 3))$, $((2^4), (1^5, 3), (3, 5))$, \\
$((1^5), (2, 3), (1^3, 2))$, $((1^5), (1^3, 2), (2, 3))$, $((1^7), (3, 4), (2^2, 3))$, \\
$((1^7), (2^2, 3), (3, 4))$. 
\end{thm}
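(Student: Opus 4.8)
The plan is to establish the dichotomy by classifying the minimal obstructions to finiteness. By Proposition~\ref{sec:3.6}, $\mathbf{d}$ fails to be of finite type in $\mathscr{J}$ exactly when it has a summand $\mathbf{d'}$ on $\mathscr{J}$ with $Q(\mathbf{d'})\le 0$; here I use that $Q$ is integer-valued (it is a difference of dimensions of varieties, by its definition), so that the negation of $Q\ge 1$ is $Q\le 0$. Since $\Lambda^{J}$ is a semi-group, the summand relation is transitive, so choosing among the critical summands of $\mathbf{d}$ one of least $\dim V$ produces a \emph{summand-minimal} critical $\mathbf{d'}$, meaning every proper nonzero summand of $\mathbf{d'}$ has $Q\ge 1$. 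It then suffices to show that $(\mathbf{d'})^{+}$ is one of the ten listed triples.

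I would first normalize. The form $Q$ depends only on the multisets of parts of $\mathbf{a},\mathbf{b},\mathbf{c}$ and on $\dim V$, so $Q(\mathbf{d'})=Q((\mathbf{d'})^{+})$, and inserting or deleting zero parts changes neither $Q$ nor the finite-type property (it only alters the lengths $r,p,q$); hence I may work throughout with $(-)^{+}$-reduced vectors. Recall that on $\Lambda^{J}$ one has $|\mathbf{a}|=|\mathbf{b}|=|\mathbf{c}|=\dim V$ together with the relation $b_{p}=|\mathbf{c'}|$, $c_{q}=|\mathbf{b'}|$. I would then show that a summand-minimal critical vector in fact has $Q(\mathbf{d'})=0$: if $Q(\mathbf{d'})<0$, decreasing a single part of one flag while restoring the relation $b_{p}=|\mathbf{c'}|$, $c_{q}=|\mathbf{b'}|$ produces a proper summand that is still critical, contradicting minimality. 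Writing $Q=0$ as $\|\mathbf{a}\|^{2}+\|\mathbf{b}\|^{2}+\|\mathbf{c}\|^{2}=(\dim V)^{2}$, the same peeling-off argument applied to an inequality $a_{i}>a_{i+1}$ should force the $A$-flag to have constant gaps, i.e. $\mathbf{a}^{+}=(k^{m})$ with $km=\dim V$, in agreement with every entry of the list.

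The crux is an a priori upper bound on $\dim V$; without it the candidate set is infinite. The easy estimates already constrain things from below: from $\|\mathbf{a}\|^{2}\ge\dim V$, $\|\mathbf{b}\|^{2}=\|\mathbf{b'}\|^{2}+|\mathbf{c'}|^{2}\ge|\mathbf{b'}|+|\mathbf{c'}|^{2}$ and symmetrically for $\mathbf{c}$, the relation $Q=0$ forces $\dim V\ge 4$, matching the smallest entry of the list. For the upper bound I would argue by descent, using $Q=0$ together with the convexity estimate $\|\mathbf{a}\|^{2}\ge(\dim V)^{2}/\ell(\mathbf{a})$: a sufficiently large or sufficiently spread critical vector always contains an explicit proper critical summand obtained by subtracting a small element of $\Lambda^{J}$, contradicting minimality. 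Equivalently, one may invoke the classification of isotropic ($Q=0$) dimension vectors for the star quiver $Q_{r,p,q}$ in the spirit of \cite{mwz1999} and intersect with the sub-semigroup $\Lambda^{J}$. This should cap $\dim V$ at $9$ and bound all parts, leaving finitely many candidates; I would then enumerate the $(-)^{+}$-reduced vectors with $\dim V\le 9$, $\mathbf{a}^{+}=(k^{m})$ and $Q=0$, checking summand-minimality of each directly from the formula for $Q$. Organized by $\dim V\in\{4,5,6,7,8,9\}$ and the evident $\mathbf{b}\leftrightarrow\mathbf{c}$ symmetry, this yields exactly the ten triples.

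I expect the genuinely hard step to be the boundedness argument of the third paragraph: converting summand-minimality into a uniform bound on $\dim V$, i.e. proving that every large critical vector splits off a smaller critical one inside $\Lambda^{J}$. Once that descent is in place, the normalization to $Q=0$ and $\mathbf{a}^{+}=(k^{m})$ and the final verification are routine, if lengthy, computations with the quadratic form $Q$.
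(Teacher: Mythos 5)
Your overall strategy --- reduce to a \emph{summand-minimal} critical vector (one with $Q\le 0$ all of whose proper summands have $Q\ge 1$) and show its $(-)^{+}$-reduction lies in the list --- is logically sound and is organized quite differently from the paper, which instead runs an exhaustive case division on the numbers of nonzero parts $r,p,q$ and the minimal nonzero parts of $\mathbf{a},\mathbf{b'},\mathbf{c'}$, proves $Q(\mathbf{d})\ge 1$ in the ``good'' cases by relaxing to real-valued partitions and minimizing $Q$, and in the ``bad'' cases writes down an explicit summand from the list. The problem is that the three structural claims on which your enumeration rests --- a summand-minimal critical vector has $Q=0$, has $\mathbf{a}^{+}$ constant, and has $\dim V\le 9$ --- are precisely the content of the theorem, and none of them is actually proven; each is supported only by a ``peeling-off'' heuristic. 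That heuristic fails as stated: if $\mathbf{e}\in\Lambda^{J}$ is the unit element with a single $1$ in position $i$ of $\mathbf{a}$, a single $1$ in position $j<p$ of $\mathbf{b}$, and $c_q=1$, then a direct computation gives $2Q(\mathbf{d}-\mathbf{e})=2Q(\mathbf{d})+2\bigl(|\mathbf{c'}|-a_i-b_j+1\bigr)$, and the correction term is strictly positive whenever the parts of $\mathbf{a}$ and $\mathbf{b'}$ are small relative to $|\mathbf{c'}|$ --- exactly the regime of most entries of the list. So removing a small element of $\Lambda^{J}$ does not in general preserve criticality, and the descent must instead remove carefully chosen larger summands whose shape depends on $(r,p,q)$ and the minimal parts; identifying those summands is the case analysis you are trying to bypass.

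The fallback appeal to ``the classification of isotropic dimension vectors for $Q_{r,p,q}$ in the spirit of \cite{mwz1999}'' does not close this gap either: for most $(r,p,q)$ the quiver is wild, the set of dimension vectors with $Q\le 0$ is infinite and is not controlled by finitely many isotropic roots, and the finiteness of the answer comes entirely from the subsemigroup constraints $b_p=|\mathbf{c'}|$, $c_q=|\mathbf{b'}|$ combined with summand-minimality --- which is the statement to be proven, not a known input. Until you supply, for each shape regime, either an explicit critical proper summand (this is what the paper's cases $(1')$--$(11')$ do) or a genuinely uniform bound forcing $\dim V\le 9$, the final ``enumerate and check'' step has no finite list to enumerate, and the argument remains a plan rather than a proof.
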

\begin{proof}
First, we will show that $0\neq {\bf d=(a, b, c)}\in {\Lambda}^{J}$ satisfies at least one of the following three conditions by dividing the cases for ${\bf d}$. \\ 
(I) $Q({\bf d})\geq 1$. \\ 
(I\hspace{-.1em}I) ${\bf d}$ has at least one summand ${\bf d'}$ on $\mathscr{J}$ where ${\bf {d'}^{+}}$ is one of \\ 
$((2^3), (1^3, 3), (1^3, 3))$, $((3^3), (2^2, 5), (1^5, 4))$, $((1^4), (1^2, 2), (1^2, 2))$, \\
$((2^4), (3, 5), (1^5, 3))$, $((1^5), (2, 3), (1^3, 2))$, or $((1^7), (3, 4), (2^2, 3))$. \\ 
(I\hspace{-.1em}I\hspace{-.1em}I) ${\bf d}$ is of finite type in $\mathscr{F}$. 

Here, ${\bf d}\in {\Lambda}^{J}$ satisfying (I\hspace{-.1em}I\hspace{-.1em}I) satisfies (I), 
but the reason for separating \\
(I\hspace{-.1em}I\hspace{-.1em}I) and (I) is to simplify  the proof. 
The condition that {\bf d} satisfies (I\hspace{-.1em}I\hspace{-.1em}I) is given by \cite{mwz1999}, 
and {\bf d} is of finite type in $\mathscr{J}$.


In the following, we first present a diagram showing the division into cases. Next, we investigate the relationship between each case and conditions (I), (I\hspace{-.1em}I), and (I\hspace{-.1em}I\hspace{-.1em}I). Finally, we prove the theorem by using the relations.

Let $r, p$, and $q$ denote the number of parts of ${\bf a^{+}, b^{+}}$, and ${\bf c^{+}}$, respectively. 
We also assume without loss of generality that $p\leq q$.

\[ 
\xymatrix@R=10pt{
 (\rm{I}\hspace{-.1em}\rm{I}\hspace{-.1em}\rm{I}) & r=1 \ar@{=>}[l] & HYPO \ar[l] \ar[dl] \ar[dd] \\
 (1)\; (\rm{I}) & r=2 \ar@{=>}[l] \\
 (2)\; (\rm{I}) & \min{({\bf a^{+}})}=1 \ar@{=>}[l] & r=3 \ar[l]  \ar[d]\\
 (\rm{I}\hspace{-.1em}\rm{I}\hspace{-.1em}\rm{I}) & p=1 \ar@{=>}[l] & \min{({\bf a^{+}})}\neq 1 \ar[dl] \ar[r] \ar[l] \ar[dd] & p\geq 4 \ar@{=>}[r] & (\rm{I}\hspace{-.1em}\rm{I})\; (1') \\ 
 (3)\; (\rm{I}) & p=2 \ar@{=>}[l] \\ 
 (4)\; (\rm{I}) & \min{({\bf a^{+}})}=2 \ar@{=>}[l] & p=3 \ar[d] \ar[l] \\ 
 (5)\; (\rm{I}) & \min{({\bf b'^{+}})}=1 \ar@{=>}[l] & \min{({\bf a^{+}})}\neq 2 \ar[l] \ar[d] \\ 
 (6)\; (\rm{I}) & 3\leq q\leq 5 \ar@{=>}[l] & \min{({\bf b'^{+}})}\neq 1 \ar[l] \ar[r] & q\geq 6 \ar@{=>}[r] & (\rm{I}\hspace{-.1em}\rm{I})\; (2')
} 
\] 

\[
\xymatrix@R=10pt{
 & & HYPO \ar[d] \\ 
 (\rm{I}\hspace{-.1em}\rm{I}\hspace{-.1em}\rm{I}) & p=1 \ar@{=>}[l] & r=4 \ar[l] \ar[r] \ar[d] & p\geq 3 \ar@{=>}[r] & (\rm{I}\hspace{-.1em}\rm{I})\; (3') \\
 (7)\; (\rm{I}) & \min{({\bf a^{+}})}=1 \ar@{=>}[l] & p=2 \ar[l] \ar[d] \\ 
 (\rm{I}\hspace{-.1em}\rm{I}\hspace{-.1em}\rm{I}) & |{\bf b'}|=1 \ar@{=>}[l] & \min{({\bf a^{+}})}\neq 1 \ar[l] \ar[dl] \ar[dd] \\ 
 (8)\; (\rm{I}) & |{\bf b'}|=2 \ar@{=>}[l] \\ 
 (\rm{I}\hspace{-.1em}\rm{I}\hspace{-.1em}\rm{I}) & q=2 \ar@{=>}[l] & |{\bf b'}|\geq 3 \ar[l] \ar[dl] \ar[ddl] \ar[r] & q\geq 6 \ar@{=>}[r] & (\rm{I}\hspace{-.1em}\rm{I})\; (4') \\ 
 (\rm{I}\hspace{-.1em}\rm{I}\hspace{-.1em}\rm{I}) & q=3 \ar@{=>}[l] \\ 
 (9)\; (\rm{I}) & 4\leq q\leq 5 \ar@{=>}[l] 
}
\]

\[
\xymatrix@R=10pt{
 & & HYPO \ar[d] \\ 
 (\rm{I}\hspace{-.1em}\rm{I}\hspace{-.1em}\rm{I}) & p=1 \ar@{=>}[l] & r=5 \ar[l] \ar[r] \ar[d] & p\geq 3 \ar@{=>}[r] & (\rm{I}\hspace{-.1em}\rm{I})\; (5') \\ 
 (\rm{I}\hspace{-.1em}\rm{I}\hspace{-.1em}\rm{I}) & q=2 \ar@{=>}[l] & p=2 \ar[l] \ar[dl] \ar[dd] \\ 
 (\rm{I}\hspace{-.1em}\rm{I}\hspace{-.1em}\rm{I}) & q=3 \ar@{=>}[l] \\ 
 (\rm{I}\hspace{-.1em}\rm{I}\hspace{-.1em}\rm{I}) & |{\bf b'}|=1 \ar@{=>}[l] & q\geq 4 \ar[l] \ar[r] & |{\bf b'}|\neq 1 \ar@{=>}[r] & (\rm{I}\hspace{-.1em}\rm{I})\; (6')
}
\]

\[
\xymatrix@R=10pt{
 & & HYPO \ar[d] \\ 
 (\rm{I}\hspace{-.1em}\rm{I}\hspace{-.1em}\rm{I}) & p=1 \ar@{=>}[l] & r=6 \ar[l] \ar[r] \ar[d] & p\geq 3 \ar@{=>}[r] & (\rm{I}\hspace{-.1em}\rm{I})\; (7') \\ 
 (\rm{I}\hspace{-.1em}\rm{I}\hspace{-.1em}\rm{I}) & |{\bf b'}|=1 \ar@{=>}[l] & p=2 \ar[l] \ar[d] \\ 
 (\rm{I}\hspace{-.1em}\rm{I}\hspace{-.1em}\rm{I}) & q=2 \ar@{=>}[l] & |{\bf b'}|\neq 1 \ar[l] \ar[dl] \ar[r] & q\geq 4 \ar@{=>}[r] & (\rm{I}\hspace{-.1em}\rm{I})\; (8') \\ 
 (10)\; (\rm{I}) & q=3 \ar@{=>}[l] 
}
\]

\[
\xymatrix@R=10pt{
 & & HYPO \ar[d] \\ 
 (\rm{I}\hspace{-.1em}\rm{I}\hspace{-.1em}\rm{I}) & p=1 \ar@{=>}[l] & r\geq 7 \ar[l] \ar[r] \ar[d] & p\geq 3 \ar@{=>}[r] & (\rm{I}\hspace{-.1em}\rm{I})\; (9') \\ 
 (\rm{I}\hspace{-.1em}\rm{I}\hspace{-.1em}\rm{I}) & |{\bf b'}|=1  \ar@{=>}[l] & p=2 \ar[l] \ar[d] \\ 
 (\rm{I}\hspace{-.1em}\rm{I}\hspace{-.1em}\rm{I}) & q=2 \ar@{=>}[l] & |{\bf b'}|\neq 1 \ar[l] \ar[r] \ar[d] & q\geq 4 \ar@{=>}[r] & (\rm{I}\hspace{-.1em}\rm{I})\; (10') \\ 
 (\rm{I}\hspace{-.1em}\rm{I}\hspace{-.1em}\rm{I}) & \min{({\bf {b'}^{+}})}=2 \ar@{=>}[l] & q=3 \ar[l] \ar[d] \\ 
 (\rm{I}\hspace{-.1em}\rm{I}\hspace{-.1em}\rm{I}) & \min{({\bf {c'}^{+}})}=1 \ar@{=>}[l] & \min{({\bf {b'}^{+}})}\neq 2 \ar[l] \ar[r] & \min{({\bf {c'}^{+}})}\neq 1 \ar@{=>}[r] & (\rm{I}\hspace{-.1em}\rm{I})\; (11')
}
\]

We first show that ${\bf d=(a, b, c)}$ satisfies (I) in cases (1) to (10). 
For $(A, B, C)\in {\rm Jl}_{\bf (a, b, c)}(V)$, let the dimensions of $B_{\ell (b)-1}$ and $C_{\ell (c)-1}$ be $x:=\dim{B_{\ell (b)-1}}$ and $y:=\dim{C_{\ell (c)-1}}$. 


Then 
\begin{align*}
{\bf d=(a, b, c)}=({\bf a}, ({\bf b'}:=(b_1, \cdots , b_{\ell ({\bf b})-1}), y), ({\bf c'}:=(c_1, \cdots , c_{\ell ({\bf c})-1}), x)). 
\end{align*}
Therefore, we get 
\begin{align*}
Q({\bf d})&=(||{\bf a}||^2+||{\bf b}||^2+||{\bf c}||^2-(x+y)^2)/2\\
&=(||{\bf a}||^2+||{\bf b'}||^2+||{\bf c'}||^2-2xy)/2.
\end{align*}

Where ${\bf a}$, ${\bf b'}$, and ${\bf c'}$ are integer partitions of $x+y$, $x$, and $y$, respectively. 
Now consider ${\bf a}^{\bbR}$, ${\bf b'}^{\bbR}$, and ${\bf c'}^{\bbR}$ as partitions of $x+y$, $x$, and $y$ in the range of non-negative real numbers, respectively. 
Here, it assumes that the conditions (1) to (10) (for the number of non-zero parts and the minimum value of non-zero parts) are satisfied for each case. 
Then there exists a ${\bf d^{\bbR}:=(a^{\bbR}, {b'}^{\bbR}, {c'}^{\bbR})}$ such that $Q({\bf d}^{\bbR}):=(||{\bf a^{\bbR}}||^2+||{\bf {b'}}^{\bbR}||^2+||{\bf {c'}}^{\bbR}||^2-2xy)/2$ is minimized. 
Also, at this time, ${\bf a}^{\bbR}$, ${\bf b'}^{\bbR}$, and ${\bf c'}^{\bbR}$ are unique, except for the reordering of their respective parts. 
Here, since $Q({\bf d})\geq 1$ if $Q({\bf d^{\bbR}})>0$, we show that $Q({\bf d^{\bbR}})>0$ for ${\bf d}$ in the case (1) to (10). 
The table of ${\bf d^{{\bbR}^{+}}}$ and $Q({\bf d^{\bbR}})$ for the case (1) to (10) is given bellow. 

\begin{align*}
&(1)\; {\bf d^{{\bbR}^{+}}}\! \! \! \! =((((x+y)/2)^2), (1^x, y), (1^y, x)),\\
&\; \; \; \; \; ( x+y\geq 2,\; y\geq x), \\
&\; \; \; \; \; 2Q({\bf d^{{\bbR}^{+}}})=-2xy+((x+y)^2)/2+x+y \\
&\; \; \; \; \; \; \; \; \; \; \; \; \; \; \; \; \; \; =((x-y+1)^2)/2+(4y-1)/2. \\
&(2)\; {\bf d^{{\bbR}^{+}}}\! \! \! \! =((1, ((x+y-1)/2)^2), (1^x, y), (1^y, x)), \\
&\; \; \; \; \; (x+y\geq 3,\; y\geq x),\\
&\; \; \; \; \; 2Q({\bf d^{{\bbR}^{+}}})=-2xy+((x+y-1)^2)/2+x+y+1=((x-y)^2)/2+3/2. \\ 
&(3)\; {\bf d^{{\bbR}^{+}}}\! \! \! \! =((((x+y)/3)^3), (x, y), (1^y, x)), \\
&\; \; \; \; \; (x+y\geq 6,\; x\geq 1,\; y\geq 1),\\
&\; \; \; \; \; 2Q({\bf d^{{\bbR}^{+}}})=-2xy+((x+y)^2)/3+x^2+y=((2x-y)^2)/3+y. \\ 
&(4)\; {\bf d^{{\bbR}^{+}}}\! \! \! \! =((2, ((x+y-2)/2)^2), ((x/2)^2, y), (1^y, x)), \\
&\; \; \; \; \; (x+y\geq 6,\; x\geq 2,\; y\geq 2),\\
&\; \; \; \; \; 2Q({\bf d^{{\bbR}^{+}}})=-2xy+((x+y-2)^2)/2+x^2/2+y+4 \\
&\; \; \; \; \; \; \; \; \; \; \; \; \; \; \; \; \; \; =((2x-y-2)^2)/4+(y^2-8y+20)/4. \\ 
&(5)\; {\bf d^{{\bbR}^{+}}}\! \! \! \! =((((x+y)/3)^3), (1, x-1 , y), (1^y, x)), \\
&\; \; \; \; \; (x+y\geq 9,\; x\geq2,\; y\geq 2), \\
&\; \; \; \; \; 2Q({\bf d^{{\bbR}^{+}}})=-2xy+((x+y)^2)/3+(x-1)^2+y+1\\
&\; \; \; \; \; \; \; \; \; \; \; \; \; \; \; \; \; \; =((4x-2y-3)^2)/12+5/4. \\ 
&(6)\; (q=3)\; {\bf d^{{\bbR}^{+}}}\! \! \! \! =((((x+y)/3)^3), ((x/2)^2, y), ((y/2)^2, x)), \\
&\; \; \; \; \; \; \; \; \; \; \; \; \; \; \; \; \; (x+y\geq 9,\; x\geq 4,\; y\geq 2), \\
&\; \; \; \; \; \; \; \; \; \; \; \; \; \; \; \; \; 2Q({\bf d^{{\bbR}^{+}}})= -2xy+((x+y)^2)/3+x^2/2+y^2/2\\
&\; \; \; \; \; \; \; \; \; \; \; \; \; \; \; \; \; \; \; \; \; \; \; \; \; \; \; \; \; \; =((5x-4y)^2)/30+(3y^2)/10. \\
&\; \; \; \; \; (q=4)\; {\bf d^{{\bbR}^{+}}}\! \! \! \! =((((x+y)/3)^3), ((x/2)^2, y), ((y/3)^3, x)), \\
&\; \; \; \; \; \; \; \; \; \; \; \; \; \; \; \; \; (x+y\geq 9,\; x\geq 4,\; y\geq 3), \\
&\; \; \; \; \; \; \; \; \; \; \; \; \; \; \; \; \; 2Q({\bf d^{{\bbR}^{+}}})=-2xy+((x+y)^2)/3+x^2/2+y^2/3\\
&\; \; \; \; \; \; \; \; \; \; \; \; \; \; \; \; \; \; \; \; \; \; \; \; \; \; \; \; \; \; =((5x-4y)^2)/30+(2y^2)/15. \\
\end{align*}
\begin{align*}
&\; \; \; \; \; (q=5)\; {\bf d^{{\bbR}^{+}}}\! \! \! \! =((((x+y)/3)^3), ((x/2)^2, y), ((y/4)^4, x)),\\ 
&\; \; \; \; \; \; \; \; \; \; \; \; \; \; \; \; \; (x+y\geq 9,\; x\geq 4,\; y\geq 4), \\
&\; \; \; \; \; \; \; \; \; \; \; \; \; \; \; \; \; 2Q({\bf d^{{\bbR}^{+}}})=-2xy+((x+y)^2)/3+x^2/2+y^2/4\\
&\; \; \; \; \; \; \; \; \; \; \; \; \; \; \; \; \; \; \; \; \; \; \; \; \; \; \; \; \; \; =((5x-4y)^2)/30+(y^2)/20. \\
&(7)\; {\bf d^{{\bbR}^{+}}}\! \! \! \! =((1, ((x+y-1)/3)^3), (x, y), (1^y, x)),\\
&\; \; \; \; \; (x+y\geq 4,\; x\geq 1,\; y\geq 1), \\ 
&\; \; \; \; \; 2Q({\bf d^{{\bbR}^{+}}})= -2xy+((x+y-1)^2)/3+x^2+y+1\\
&\; \; \; \; \; \; \; \; \; \; \; \; \; \; \; \; \; \; =((4x-2y-1)^2)/12+5/4. \\ 
&(8)\; {\bf d^{{\bbR}^{+}}}\! \! \! \! =((((2+y)/4)^4), (2, y), (1^y, 2)), \\ 
&\; \; \; \; \; (y\geq 6), \\
&\; \; \; \; \; 2Q({\bf d^{{\bbR}^{+}}})= -4y+((2+y)^2)/4+2^2+y=(y^2-8y+20)/4. \\ 
&(9)\; (q=4)\; {\bf d^{{\bbR}^{+}}}\! \! \! \! =((((x+y)/4)^4), (x, y), ((y/3)^3, x)), \\ 
&\; \; \; \; \; \; \; \; \; \; \; \; \; \; \; \; \; (x+y\geq 8,\; x\geq 3,\; y\geq 3),\\
&\; \; \; \; \; \; \; \; \; \; \; \; \; \; \; \; \; 2Q({\bf d^{{\bbR}^{+}}})= -2xy+((x+y)^2)/4+x^2+y^2/3\\
&\; \; \; \; \; \; \; \; \; \; \; \; \; \; \; \; \; \; \; \; \; \; \; \; \; \; \; \; \; \; =((5x-3y)^2)/20+(2y^2)/15.\\
&\; \; \; \; \; (q=5)\; {\bf d^{{\bbR}^{+}}}\! \! \! \! =((((x+y)/4)^4), (x, y), ((y/4)^4, x)),\\
&\; \; \; \; \; \; \; \; \; \; \; \; \; \; \; \; \; (x+y\geq 8,\; x\geq 3,\; y\geq 4),\\
&\; \; \; \; \; \; \; \; \; \; \; \; \; \; \; \; \; 2Q({\bf d^{{\bbR}^{+}}})=-2xy+((x+y)^2)/4+x^2+y^2/4\\
&\; \; \; \; \; \; \; \; \; \; \; \; \; \; \; \; \; \; \; \; \; \; \; \; \; \; \; \; \; \; =((5x-3y)^2)/20+(y^2)/20. \\
&(10)\; {\bf d^{{\bbR}^{+}}}\! \! \! \! =((((x+y)/6)^6), (x, y), ((y/2)^2, x)),\\
&\; \; \; \; \; (x+y\geq 6,\; x\geq 2,\; y\geq 2),\\
&\; \; \; \; \; 2Q({\bf d^{{\bbR}^{+}}})= -2xy+((x+y)^2)/6+x^2+y^2/2\\
&\; \; \; \; \; \; \; \; \; \; \; \; \; \; \; \; \; \; =((7x-5y)^2)/42+(y^2)/14. 
\end{align*}

The reason why it is $Q({\bf d^{\bbR}})>0$ follows immediately from the fact that solving for $Q({\bf d^{\bbR}})\leq 0$ yields $(x, y)=(0, 0)$ or no solution.

Next, for ${\bf d}$ in the case $(1')$ to $(11')$, we show that there is a summand ${\bf d'}$ of ${\bf d}$ on $\mathscr{J}$ such that ${\bf d'^{+}}$ is one of the following 
$((2^3), (1^3, 3), (1^3, 3))$, $((3^3), (2^2, 5), (1^5, 4))$, $((1^4), (1^2, 2), (1^2, 2))$, $((2^4), (3, 5), (1^5, 3))$, \\
$((1^5), (2, 3), (1^3, 2))$, or $((1^7), (3, 4), (2^2, 3))$.\\

In the case of $(1')$, ${\bf d=(a, b, c)}\in {\Lambda}^{J}$ contained in the case classification of $(1')$ satisfies: 
$q\geq p\geq 4$, $r=3$, $min({\bf a^{+}})\neq 1$, so $b_{\ell ({\bf b})}, c_{\ell ({\bf c})}\geq 3$, and $\exists  a_i, a_j, a_k\geq 2$ for some $1\leq i<j<k\leq \ell ({\bf a})$, 
$\exists b_{i'}, b_{j'},b_{k'}\geq 1$ for some $1\leq i'<j'<k'<\ell ({\bf b})$, 
$\exists c_{i''}, c_{j''}, c_{k''}\geq 1$ for some $1\leq i''<j''<k''<\ell ({\bf c})$. 
Therefore,  ${\bf d}$ has a summand ${\bf d'}=((\cdots , {\overset{i}{2}}, \cdots , \overset{j}{2}, \cdots , \overset{k}{2}, \cdots)$, 
$(\cdots , \overset{i'}{1}, \cdots , \overset{j'}{1}, \cdots , \overset{k'}{1}, \cdots , 3)$, 
$(\cdots , \overset{i''}{1}, \cdots , \overset{j''}{1}, \cdots , \overset{k''}{1}, \cdots , 3))$ on $\mathscr{J}$, \\
where ${\bf {d'}^{+}}=((2^3), (1^3, 3), (1^3, 3))$. \\

In the case of $(2')$, ${\bf d=(a, b, c)}\in {\Lambda}^{J}$ contained in the case classification of $(2')$ satisfies: 
$p=3$, $q\geq 6$, $r=3$, $min({\bf a^{+}})\neq 1, 2$, $min({\bf b'^{+}})\neq 1$, 
so $b_{\ell ({\bf b})}\geq 5$, $c_{\ell ({\bf c})}\geq 4$, and $\exists  a_i, a_j, a_k\geq 3$ for some $1\leq i<j<k\leq \ell ({\bf a})$, 
$\exists b_{i'}, b_{j'}\geq 2$ for some $1\leq i'<j'<\ell ({\bf b})$, 
$\exists c_{i''}, c_{j''}, c_{k''}, c_{l''}, c_{m''}\geq 1$ for some $1\leq i''<j''<k''<l''<m''<\ell ({\bf c})$. 
Therefore,  ${\bf d}$ has a summand ${\bf d'}=((\cdots , {\overset{i}{3}}, \cdots , \overset{j}{3}, \cdots , \overset{k}{3}, \cdots)$, 
$(\cdots , \overset{i'}{2}, \cdots , \overset{j'}{2}, \cdots , 5)$, \\
$(\cdots , \overset{i''}{1}, \cdots , \overset{j''}{1}, \cdots , \overset{k''}{1}, \cdots , \overset{l''}{1}, \cdots , \overset{m''}{1}, \cdots , 4))$ on $\mathscr{J}$, \\
where ${\bf {d'}^{+}}=((3^3), (2^2, 5), (1^5, 4))$. \\ 

In the case of $(3')$, $(5')$, $(7')$, $(9')$, ${\bf d=(a, b, c)}\in {\Lambda}^{J}$ included in a case classification that satisfies at least one of $(3')$, $(5')$, $(7')$, or $(9')$ satisfies: 
$q\geq p\geq 3$, $r\geq 4$, 
so $b_{\ell ({\bf b})}, c_{\ell ({\bf c})}\geq 2$, and $\exists  a_i, a_j, a_k, a_l\geq 1$ for some $1\leq i<j<k<l\leq \ell ({\bf a})$, 
$\exists b_{i'}, b_{j'}\geq 1$ for some $1\leq i'<j'<\ell ({\bf b})$, 
$\exists c_{i''}, c_{j''}\geq 1$ for some $1\leq i''<j''<\ell ({\bf c})$. 
Therefore,  ${\bf d}$ has a summand ${\bf d'}=((\cdots , {\overset{i}{1}}, \cdots , \overset{j}{1}, \cdots , \overset{k}{1}, \cdots , \overset{l}{1}, \cdots)$, 
$(\cdots , \overset{i'}{1}, \cdots , \overset{j'}{1}, \cdots , 2)$, \\
$(\cdots , \overset{i''}{1}, \cdots , \overset{j''}{1}, \cdots , 2))$ on $\mathscr{J}$, 
where ${\bf {d'}^{+}}=((1^4), (1^2, 2), (1^2, 2))$. \\ 

In the case of $(4')$, ${\bf d=(a, b, c)}\in {\Lambda}^{J}$ contained in the case classification of $(4')$ satisfies: 
$p=2$, $q\geq 6$, $r=4$, $min({\bf a^{+}})\neq 1$, $|{\bf b'}|\geq 3$, 
so $b_{\ell ({\bf b})}\geq 5$, $c_{\ell ({\bf c})}=|{\bf b'}|\geq 3$, and 
$\exists  a_i, a_j, a_k, a_l\geq 2$ for some $1\leq i<j<k<l\leq \ell ({\bf a})$, 
$\exists b_{i'}\geq 3$ for some $1\leq i'<\ell ({\bf b})$, 
$\exists c_{i''}, c_{j''}, c_{k''}, c_{l''}, c_{m''}\geq 1$ for some $1\leq i''<j''<k''<l''<m''<\ell ({\bf c})$. 
Therefore,  ${\bf d}$ has a summand ${\bf d'}=((\cdots , {\overset{i}{2}}, \cdots , \overset{j}{2}, \cdots , \overset{k}{2}, \cdots, \overset{l}{2}, \cdots)$, 
$(\cdots , \overset{i'}{3}, \cdots , 5)$, \\
$(\cdots , \overset{i''}{1}, \cdots , \overset{j''}{1}, \cdots , \overset{k''}{1}, \cdots , \overset{l''}{1}, \cdots , \overset{m''}{1}, \cdots , 3))$ on $\mathscr{J}$, \\
where ${\bf {d'}^{+}}=((2^4), (3, 5), (1^5, 3))$. \\ 

In the case of $(6')$, $(8')$, $(10')$, ${\bf d=(a, b, c)}\in {\Lambda}^{J}$ included in a case classification that satisfies at least one of $(6')$, $(8')$, or $(10')$ satisfies: 
$p=2$, $q\geq 4$, $r\geq 5$, $|{\bf b'}|\neq 1$, 
so $b_{\ell ({\bf b})}\geq 3$, $c_{\ell ({\bf c})}\geq 2$, and $\exists  a_i, a_j, a_k, a_l, a_m\geq 1$ for some $1\leq i<j<k<l<m\leq \ell ({\bf a})$, 
$\exists b_{i'}\geq 2$ for some $1\leq i'<\ell ({\bf b})$, 
$\exists c_{i''}, c_{j''}, c_{k''}\geq 1$ for some $1\leq i''<j''<k''<\ell ({\bf c})$. 
Therefore,  ${\bf d}$ has a summand ${\bf d'}=((\cdots , {\overset{i}{1}}, \cdots , \overset{j}{1}, \cdots , \overset{k}{1}, \cdots , \overset{l}{1}, \cdots , \overset{m}{1}, \cdots)$, 
$(\cdots , \overset{i'}{2}, \cdots , 3)$, 
$(\cdots , \overset{i''}{1}, \cdots , \overset{j''}{1}, \cdots , \overset{k''}{1}, \cdots , 2))$ on $\mathscr{J}$, 
where ${\bf {d'}^{+}}=((1^5), (2, 3), (1^3, 2))$. \\ 

In the case of $(11')$, ${\bf d=(a, b, c)}\in {\Lambda}^{J}$ contained in the case classification of $(11')$ satisfies: 
$p=2$, $q=3$, $r\geq 7$, $min({\bf b'^{+}})\neq 1, 2$, $min({\bf c'^{+}})\neq 1$, 
so $b_{\ell ({\bf b})}\geq 4$, $c_{\ell ({\bf c})}\geq 3$, and 
$\exists  a_i, a_j, a_k, a_l, a_m, a_n, a_o\geq 1$ for some $1\leq i<j<k<l<m<n<o\leq \ell ({\bf a})$, 
$\exists b_{i'}\geq 3$ for some $1\leq i'<\ell ({\bf b})$, 
$\exists c_{i''}, c_{j''}\geq 2$ for some $1\leq i''<j''<\ell ({\bf c})$. 
Therefore,  ${\bf d}$ has a summand ${\bf d'}=((\cdots , {\overset{i}{1}}, \cdots , \overset{j}{1}, \cdots , \overset{k}{1}, \cdots , {\overset{l}{1}}, \cdots , \overset{m}{1}, \cdots , \overset{n}{1}, \cdots , {\overset{o}{1}}, \cdots)$, 
$(\cdots , \overset{i'}{3}, \cdots , 4)$, 
$(\cdots , \overset{i''}{2}, \cdots , \overset{j''}{2}, \cdots , 3))$ on $\mathscr{J}$, where ${\bf {d'}^{+}}=((1^7), (3, 4), (2^2, 3))$. \\ 

Now let ${\bf d}\in {\Lambda}^{J}$ be such that ${\bf d^{+}}$ is one of the following: \\
$((2^3), (1^3, 3), (1^3, 3))$, $((3^3), (2^2, 5), (1^5, 4))$, $((1^4), (1^2, 2), (1^2, 2))$, \\
$((2^4), (3, 5), (1^5, 3))$, $((1^5), (2, 3), (1^3, 2))$, and $((1^7), (3, 4), (2^2, 3))$. 
\\Then $Q({\bf d})=0$, so ${\bf d}\in {\Lambda}^{J}$ in case $(\rm{I}\hspace{-.1em}\rm{I})$ is of infinite-type in $\mathscr{J}$ from Proposition \ref{sec:3.6}.

Next, we show that any ${\bf d}\in {\Lambda}^{J}$ in cases (1) to (10) does not satisfy $(\rm{I}\hspace{-.1em}\rm{I})$. Then $Q({\bf d'})\geq 1$ for any summand ${\bf d'}$ of {\bf d} on $\mathscr{J}$ from the division into cases. Therefore, {\bf d} in the case (1) to (10) is of finite type in $\mathscr{J}$ from Proposition \ref{sec:3.6}.

In the case of (1), ${\bf d=(a, b, c)}\in {\Lambda}^{J}$ contained in the case classification of (1) satisfies $r=2$. Therefore, {\bf d} does not satisfy $(\rm{I}\hspace{-.1em}\rm{I})$.

In the case of (2), ${\bf d=(a, b, c)}\in {\Lambda}^{J}$ contained in the case classification of (2) satisfies $r=3$, and $\min{({\bf a^{+}})}=1$. Therefore, {\bf d} does not satisfy $(\rm{I}\hspace{-.1em}\rm{I})$.

In the case of (3), ${\bf d=(a, b, c)}\in {\Lambda}^{J}$ contained in the case classification of (3) satisfies $r=3$, and $p=2$. Therefore, {\bf d} does not satisfy $(\rm{I}\hspace{-.1em}\rm{I})$.

In the case of (4), ${\bf d=(a, b, c)}\in {\Lambda}^{J}$ contained in the case classification of (4) satisfies $r=3$, $p=3$ and $\min{({\bf a^{+}})}=2$. Therefore, {\bf d} does not satisfy $(\rm{I}\hspace{-.1em}\rm{I})$.

In the case of (5), ${\bf d=(a, b, c)}\in {\Lambda}^{J}$ contained in the case classification of (5) satisfies $r=3$, $p=3$ and $\min{({\bf {b'}^{+}})}=1$. Therefore, {\bf d} does not satisfy $(\rm{I}\hspace{-.1em}\rm{I})$.

In the case of (6), ${\bf d=(a, b, c)}\in {\Lambda}^{J}$ contained in the case classification of (6) satisfies $r=3$, $p=3$ and $3\leq q\leq 5$. Therefore, {\bf d} does not satisfy $(\rm{I}\hspace{-.1em}\rm{I})$.

In the case of (7), ${\bf d=(a, b, c)}\in {\Lambda}^{J}$ contained in the case classification of (7) satisfies $r=4$, $p=2$ and $\min{({\bf a^{+}})}=1$. Therefore, {\bf d} does not satisfy $(\rm{I}\hspace{-.1em}\rm{I})$.

In the case of (8), ${\bf d=(a, b, c)}\in {\Lambda}^{J}$ contained in the case classification of (8) satisfies $r=4$, $p=2$ and $|{\bf b'}|=2$. Therefore, {\bf d} does not satisfy $(\rm{I}\hspace{-.1em}\rm{I})$.

In the case of (9), ${\bf d=(a, b, c)}\in {\Lambda}^{J}$ contained in the case classification of (9) satisfies $r=4$, $p=2$ and $4\leq q\leq 5$. Therefore, {\bf d} does not satisfy $(\rm{I}\hspace{-.1em}\rm{I})$.

In the case of (10), ${\bf d=(a, b, c)}\in {\Lambda}^{J}$ contained in the case classification of (10) satisfies $r=6$, $p=2$ and $q=3$. Therefore, {\bf d} does not satisfy $(\rm{I}\hspace{-.1em}\rm{I})$.

\end{proof}

\begin{lem}\label{sec:3.9} 
Fix the natural numbers $m$ and $n$ such that $m\leq n$.  
Let ${\bf a}$ be an integer partition of n such that $\ell ({\bf a^{+}})=m$. In this case, the following are equivalent. \\ 
$(1)$ ${\bf a}$ is an integer partition of $n$ such that $||{\bf a}||^{2}$ is minimum.\\
$(2)$ The difference between any two parts of ${\bf a^{+}}$ is less than or equal to $1$. 
\end{lem}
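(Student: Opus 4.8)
The plan is to use the standard convexity (``smoothing'') argument: moving a unit from a larger part to a smaller part strictly decreases the sum of squares while preserving both $|{\bf a}|=n$ and the number of nonzero parts. Concretely, if ${\bf a^{+}}$ has two (nonzero) parts with $a_i \geq a_j + 2$, then replacing the pair $(a_i, a_j)$ by $(a_i - 1, a_j + 1)$ leaves the sum unchanged and, by the identity $(a_i-1)^2 + (a_j+1)^2 = a_i^2 + a_j^2 - 2(a_i - a_j - 1)$ together with $a_i - a_j - 1 \geq 1$, strictly lowers $||{\bf a}||^2$. Moreover $a_i - 1 \geq a_j + 1 \geq 2 > 0$, so neither part becomes zero and the condition $\ell({\bf a^{+}}) = m$ is maintained.

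Then $(1)\Rightarrow(2)$ follows immediately by contraposition: if ${\bf a}$ violates $(2)$, the move above produces a partition of $n$ with the same number $m$ of nonzero parts but strictly smaller $||{\bf a}||^2$, so ${\bf a}$ cannot realize the minimum.

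For $(2)\Rightarrow(1)$, I would first record that a partition satisfying $(2)$ is unique up to reordering: writing $n = qm + r$ with $0 \leq r < m$, condition $(2)$ forces every part of ${\bf a^{+}}$ into $\{q, q+1\}$, and the constraints $|{\bf a}| = n$, $\ell({\bf a^{+}}) = m$ then force exactly $r$ parts equal to $q+1$ and $m - r$ parts equal to $q$. Hence all partitions of $n$ into $m$ nonzero parts satisfying $(2)$ share one and the same value of $||\cdot||^2$. Now given an arbitrary partition ${\bf b}$ of $n$ into $m$ nonzero parts, apply the smoothing move repeatedly; since $||{\bf b}||^2$ is a nonnegative integer that strictly decreases at each step, the process terminates, and it can only terminate at a partition having no two parts differing by $\geq 2$, i.e. one satisfying $(2)$. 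Since ${\bf a}$ itself satisfies $(2)$, we conclude $||{\bf a}||^2 \leq ||{\bf b}||^2$ for every such ${\bf b}$, which is exactly $(1)$.

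The only subtle points, and where I would take care, are (a) verifying that the smoothing move never creates a zero part, so that the standing constraint $\ell({\bf a^{+}}) = m$ is genuinely preserved throughout the iteration, and (b) the termination-and-uniqueness bookkeeping that glues the two directions together. The algebraic identity driving the strict decrease is routine and I would not belabor it.
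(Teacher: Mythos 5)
Your proof is correct and takes essentially the same route as the paper: the same smoothing identity $(a_i-1)^2+(a_j+1)^2=a_i^2+a_j^2-2(a_i-a_j-1)$ gives $(1)\Rightarrow(2)$, and $(2)\Rightarrow(1)$ rests on the uniqueness (up to reordering) of the balanced partition. The only cosmetic difference is in the converse direction, where the paper invokes the existence of a minimizer, which must satisfy $(2)$ by the first implication, while you instead iterate the smoothing move until it terminates; your version is, if anything, more explicit about termination and about why no zero part is ever created.
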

\begin{proof}
First suppose ${\bf a}$ is an integer partition of $n$ such that $||{\bf a}||^2$ is minimum. 
Assume now that there exists non-zero parts $a_1, a_2$ of ${\bf a}$ such that $a_1-a_2\geq 2$. Then 
\begin{align*}
(a_1-1)^2+(a_2+1)^2&={a_1}^2+{a_2}^2-2a_1+2a_2+2\\
&={a_1}^2+{a_2}^2-2(a_1-a_2-1)<{a_1}^2+{a_2}^2.
\end{align*}
Here, let ${\bf b}$ be the integer partition of $n$ that can be made by replacing $a_1$ with $a_1-1$ and $a_2$ with $a_2+1$ among the parts of ${\bf a}$. Then $||{\bf b}||^2<||{\bf a}||^2$, which contradicts the assumption.

Conversely, if the difference of any two parts of ${\bf a}$ is less than or equal to one, 
then there is a unique integer partition of $n$ such that the difference of any two non-zero parts is less than or equal to 1, except for reordering. 
Also, there exists an integer partition of $n$ that minimizes the sum of the squares of each part. 
Therefore, from the proof that $(2)$ is true if $(1)$ is shown earlier, we can see that $||{\bf a}||^2$ is minimum.
\end{proof}

\begin{thm}
${\bf d=(a, b, c)}\in {\Lambda}^{J}$ is of finite type in $\mathscr{J}$ and satisfies $Q({\bf d})=1$ if and only if the $({\bf b}$ and {\bf c} unordered$)$ ${\bf d^{+}}$ is one of the following$:$ 
\begin{align*}
&((1), (1), (1, 0)), \; \; ((1^6), (2, 4), (2^2, 2)),\\
&((1^{2x+1}), (x, x+1), (x, 1, x))\; x\geq 2, \\
&((1^{2x}), (x, x), (x-1, 1, x))\; x\geq 2, \\
&((1^x), (1, x-1), (1^{x-1}, 1))\; x\geq 2, \\
&((x, x-1, 1), (1^{x}, x), (1^{x}, x))\; x\geq 2,\\
&((x^2, 1), (1^{x}, x+1), (1^{x+1}, x))\; x\geq 2, \\
&((2^3), (1^2, 4), (1^4, 2)),\; \; ((2^3), (2, 1, 3), (1^3, 3)),\; \; ((3, 2^2), (2, 1, 4), (1^4, 3)),\\
&((3^2, 2), (2, 1, 5), (1^5, 3)),\; \; ((3^2, 2), (2^2, 4), (1^4, 4)),\; \; ((4, 3, 2), (2^2, 5), (1^5, 4)),\\
&((4^2, 2), (2^2, 6), (1^6, 4)), \\
&((x^3), (x-1, 1, 2x), (1^{2x}, x))\; x\geq 3, \\
&((x^2, x-1), (x-1, 1, 2x-1), (1^{2x-1}, x))\; x\geq 4, \\
&((x, (x-1)^2), (x-1, 1, 2x-2), (1^{2x-2}, x))\; x\geq 4, \\ 
&(((x-1)^3), (x-1, 1, 2x-3), (1^{2x-3}, x))\; x\geq 4, \\ 
&((3^3), (2^2, 5), (2, 1^3, 4)), \\
&(((x-1)^3, 1), (x, 2x-2), (1^{2x-2}, x))\; x\geq 2,\\
&((x, (x-1)^2, 1), (x, 2x-1), (1^{2x-1}, x))\; x\geq 2,\\
&((x^2, x-1, 1), (x, 2x), (1^{2x}, x))\; x\geq 2,\\
&((x^3, 1), (x, 2x+1), (1^{2x+1}, x))\; x\geq 2,\\
&((2^4), (2, 6), (1^6, 2)),\; \; ((2^4), (3, 5), (2, 1^3, 3)),\; \; ((2, 1^5), (3, 4), (2^2, 3)).\\
\end{align*}
\end{thm}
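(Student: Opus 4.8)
The plan is to convert the two hypotheses into a single quadratic constraint and then enumerate its finite-type integral solutions leaf by leaf, reusing the machinery already assembled for Theorem~\ref{sec:3.8}. Writing ${\bf d}=({\bf a},({\bf b'},y),({\bf c'},x))$ with $x=|{\bf b'}|$, $y=|{\bf c'}|$ and $\dim V=x+y$ as in that proof, the identity
\[
2Q({\bf d})=||{\bf a}||^2+||{\bf b'}||^2+||{\bf c'}||^2-2xy
\]
shows that $Q({\bf d})=1$ is equivalent to $||{\bf a}||^2+||{\bf b'}||^2+||{\bf c'}||^2=2xy+2$. Since both hypotheses depend only on the isomorphism class of $F_{\bf d}$, hence only on the multiset of parts, I would work with ${\bf d}^{+}$ throughout and use the ${\bf b}\leftrightarrow{\bf c}$ (i.e.\ $x\leftrightarrow y$) symmetry to assume $p\le q$, which is exactly why the list is stated with ${\bf b}$ and ${\bf c}$ unordered.

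For the ``if'' direction I would check each entry directly. The value $Q({\bf d})=1$ follows from the displayed identity: for a sporadic entry this is a single arithmetic check, while for each one-parameter family the $x^2$, $x$ and constant terms cancel to leave $2$, exactly as in $(2x{+}1)+(2x^2{+}2x{+}1)+(2x^2{+}1)-(2x{+}1)^2=2$ for $((1^{2x+1}),(x,x{+}1),(x,1,x))$. Finite-typeness is then immediate from Theorem~\ref{sec:3.8}, which asserts that a dimension vector is of finite type unless it has a summand whose $+$-form is one of the ten critical vectors; since those all satisfy $Q=0$ while $Q({\bf d})=1$, I need only verify that each listed ${\bf d}$ admits no such summand, a finite inspection of the admissible splittings of its parts (Proposition~\ref{sec:3.6} gives the same conclusion by exhibiting $Q({\bf d'})\ge 1$ for every summand).

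For the ``only if'' direction I would run the dichotomy of Theorem~\ref{sec:3.8} in reverse. Finite-typeness excludes case $(\mathrm{I}\hspace{-.1em}\mathrm{I})$, so ${\bf d}$ lies in a leaf of type $(\mathrm{I}\hspace{-.1em}\mathrm{I}\hspace{-.1em}\mathrm{I})$ or in one of $(1)$--$(10)$. In the leaves of type $(\mathrm{I}\hspace{-.1em}\mathrm{I}\hspace{-.1em}\mathrm{I})$, where ${\bf d}$ is of finite type in $\mathscr{F}$, I would import the Magyar--Weyman--Zelevinsky classification \cite{mwz1999} of rigid triple-flag dimension vectors, retain those with $Q=1$, and intersect with $\Lambda^{J}$ (the constraint $|{\bf b'}|+|{\bf c'}|=\dim V$). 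In each leaf $(1)$--$(10)$ the shape $(r,p,q)$ and the governing minima are fixed, so $Q({\bf d})$ is bounded below by the balanced value $Q({\bf d}^{\bbR})$ computed there, each of which is a perfect square in $(x,y)$ plus a strictly positive remainder. Imposing $Q({\bf d})=1$ forces that perfect square to be small, confining $(x,y)$ to a bounded set or to a strip such as $0\le 2x-y\le 3$; on that region Lemma~\ref{sec:3.9} restricts ${\bf a},{\bf b'},{\bf c'}$ to near-balanced shapes, and the residues of the sums modulo $r,p{-}1,q{-}1$ split each strip into the finitely many families and sporadic vectors of the list (for instance the four variants $((x^3),\dots),\,((x^2,x{-}1),\dots),\,((x,(x{-}1)^2),\dots),\,(((x{-}1)^3),\dots)$ arising from case $(5)$).

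The main obstacle is the completeness and non-redundancy of this enumeration rather than any single computation. One must match the MWZ output, presented in triple-flag coordinates, against the joint-flag coordinates $({\bf a},({\bf b'},y),({\bf c'},x))$, and reconcile the families that straddle several leaves as $x$ grows, since a single family may sit in a $(1)$--$(10)$ leaf for small $r$ and pass into an $(\mathrm{I}\hspace{-.1em}\mathrm{I}\hspace{-.1em}\mathrm{I})$ leaf for large $r$. One must also pin down the sporadic entries such as $((1^6),(2,4),(2^2,2))$ and $((4^2,2),(2^2,6),(1^6,4))$, which occur precisely at the small $(x,y)$ where the integrality gap in Lemma~\ref{sec:3.9} is nonzero and several near-balanced shapes compete; these do not fall out of the relaxed minimum and must be located by hand. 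Finally the $x\leftrightarrow y$ symmetry has to be applied to collapse mirror solutions into the single ``${\bf b}$ and ${\bf c}$ unordered'' representatives, so that the resulting list is exhaustive and free of repetitions.
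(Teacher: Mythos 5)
Your proposal follows essentially the same route as the paper's proof: reduce via the case division of Theorem~\ref{sec:3.8} (importing the Magyar--Weyman--Zelevinsky list for the $\mathscr{F}$-finite leaves), use the relaxed minimum $Q({\bf d^{\bbR}})$ --- a square in $(x,y)$ plus a positive remainder --- to confine $(x,y)$ to a bounded set or strip in each of cases $(1)$--$(10)$, and then apply Lemma~\ref{sec:3.9} to identify the near-balanced minimizers and check $Q=1$, with the ${\bf b}\leftrightarrow{\bf c}$ symmetry collapsing mirror solutions. The ingredients and their roles match the paper's argument, so this is the same proof in outline.
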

\begin{proof}
First suppose ${\bf d}\in {\Lambda}^{J}$ is of finite type in $\mathscr{J}$ and satisfies $Q({\bf d})=1$. 
If {\bf d} is of finite type in $\mathscr{F}$ and satisfies $Q({\bf d})=1$, then ${\bf d^{+}}$ is one of the following from \cite{mwz1999}:
\begin{align*} 
&((1), (1), (1, 0)), \; \; ((1^6), (2, 4), (2^2, 2)), \\
&((1^{2x+1}), (x, x+1), (x, 1, x))\; x\geq 2, \\
&((1^{2x}), (x, x), (x-1, 1, x))\; x\geq 2, \\
&((1^x), (1, x-1), (1^{x-1}, 1))\; x\geq 2. 
\end{align*}
Therefore, it is sufficient to find all the {\bf d} in the case (1) to (10) of Theorem \ref{sec:3.8}  that satisfy $Q({\bf d})=1$. \\
Then, for ${\bf d^{{\bbR}^{+}}}$ in each case from (1) to (10), 
we find the pair $(x, y)$ of natural numbers that satisfies $0<Q({\bf d^{{\bbR}^{+}}})\leq 1$. 
For this $(x, y)$, is there ${\bf d}\in {\Lambda}^{J}$ that satisfies the conditions of each case classification? 
If so, use Lemma \ref{sec:3.9}  to find the ${\bf d}\in {\Lambda}^{J}$ for which $Q({\bf d})$ is the minimum. 
This ${\bf d}$ is unique except for the rearrangement of parts, and we only need to check whether it is $Q({\bf d})=1$ or not. \\
The table of $(x, y)$ satisfying $0<Q({\bf d^{{\bbR}^{+}}})\leq 1$ in each case from (1) to (10) is given bellow.
\begin{align*}
&(1)\; \; (x, y)=(1, 1), \\
&(2)\; \; (x, y)=(1, 1),\; (1, 2),\; (x, x-z)\; 1\geq z\geq -1,\; x\geq 2, \\
&(3)\; \; (x, y)=(1, 1),\; (1, 2), \\ 
&(4)\; \; (x, y)=(2, 2),\; (2, 3),\; (2, 4),\; (3, 3),\; (3, 4),\; (3, 5),\; (4, 4),\; (4, 5),\; (4, 6),  \\
&(5)\; \; (x, y)=(1, 1),\; (1, 2),\; (x, 2x-z)\; 3\geq z\geq 0,\; x\geq 2, \\
&(6)\; \; (q=3)\; \; (x, y)=(1, 1),\; (1, 2),\; (2, 1),\; (2, 2), \\
&\; \; \; \; \; \; \, (q=4)\; \; (x, y)=(1, 1),\; (1, 2),\; (2, 1),\; (2, 2),\; (2, 3),\; (3, 3), \\
&\; \; \; \; \; \; \, (q=5)\; \; (x, y)=(1, 1),\; (1, 2),\; (2, 1),\; (2, 2),\; (2, 3),\; (2, 4),\; (3, 2),\; (3, 3), \\ 
&\; \; \; \; \; \; \; \; \; \; \; \; \; \; \; \; \; \; \; \; \; \; \; \; \; \; \; \; \; \; \; \; \, (3, 4),\; (4, 4),\; (4, 5),\; (5, 6), \\
&(7)\; \; (x, y)=(1, 1),\; (1, 2),\; (1, 3),\; (x, 2x-z)\; 2\geq z\geq -1,\; x\geq 2, \\
&(8)\; \; (x, y)=(2, y)\; 6\geq y\geq 2, \\
&(9)\; \; (q=4)\; \; (x, y)=(1, 1),\; (1, 2),\; (1, 3),\; (2, 2),\; (2, 3), \\
&\; \; \; \; \; \; \, (q=5)\; \; (x, y)=(1, 1),\; (1, 2),\; (1, 3),\; (2, 2),\; (2, 3),\; (2, 4),\; (3, 4),\; (3, 5),\\
&\; \; \; \; \; \; \; \; \; \; \; \; \; \; \; \; \; \; \; \; \; \; \; \; \; \; \; \; \; \; \; \; \, (4, 6), \\
&(10)\; \; (x, y)=(1, 1),\; (1, 2),\; (2, 1),\; (2, 2),\; (2, 3),\; (2, 4),\; (3, 3),\; (3, 4),\; (4, 5).
\end{align*}
Next, we give below a table of $(x, y)$ such that among the $(x, y)$ satisfying $0<Q({\bf d^{{\bbR}^{+}}})\leq 1$ in each of the cases (1) to (10) given earlier, 
there exists ${\bf d}\in {\Lambda}^{J}$ satisfying the conditions of each case separation for $(x, y)$.
\begin{align*}
&(1)\; \; (x, y)=(1, 1), \\ 
&(2)\; \; (x, y)=(1, 2),\; (x, x-z)\; 0\geq z\geq -1,\; x\geq 2, \\ 
&(4)\; \; (x, y)=(2, 4),\; (3, 3),\; (3, 4),\; (3, 5),\; (4, 4),\; (4, 5),\; (4, 6), \\
&(5)\; \; (x, y)=(x, 2x)\; x\geq 3, \\
&\; \; \; \; \; \; \, (x, y)=(x, 2x-z)\; 3\geq z\geq 1,\; x\geq 4, \\ 
\end{align*}
\begin{align*}
&(6)\; \; (q=5)\; \; (x, y)=(4, 5),\; (5, 6), \\
&(7)\; \; (x, y)=(1, 3),\; (x, 2x-z)\; 2\geq z\geq -1,\; x\geq 2, \\ 
&(8)\; \; (x, y)=(2, 6), \\
&(9)\; \; (q=5)\; \; (x, y)=(3, 5),\; (4, 6), \\
&(10)\; \; (x, y)=(2, 4),\; (3, 3),\; (3, 4),\; (4, 5). 
\end{align*}
Finally, the following table gives the ${\bf d^{+}}$ of ${\bf d}\in {\Lambda}^{J}$ for which $Q({\bf d})$ is minimized for each $(x, y)$ in the above table.
\begin{align*}
&(1)\; \; (x, y)=(1, 1)\; \; {\bf d^{+}}=((1^2), (1, 1), (1, 1)),\\
&(2)\; \, (x, y)=(1, 2)\; \; {\bf d^{+}}=((1^3), (1, 2), (1^2, 1)),\\
&\; \; \; \; \; \; \, (x, y)=(x, x)\; \; {\bf d^{+}}=((x, x-1, 1), (1^{x}, x), (1^{x}, x))\; x\geq 2,\\ 
&\; \; \; \; \; \; \, (x, y)=(x, x+1)\; \; {\bf d^{+}}=((x^2, 1), (1^{x}, x+1), (1^{x+1}, x))\; x\geq 2, \\
&(4)\; \; (x, y)=(2, 4)\; \; {\bf d^{+}}=((2^3), (1^2, 4), (1^4, 2)),\\ 
&\; \; \; \; \; \; \, (x, y)=(3, 3)\; \; {\bf d^{+}}=((2^3), (2, 1, 3), (1^3, 3)),\\
&\; \; \; \; \; \; \, (x, y)=(3, 4)\; \; {\bf d^{+}}=((3, 2^2), (2, 1, 4), (1^4, 3)),\\
&\; \; \; \; \; \; \, (x, y)=(3, 5)\; \; {\bf d^{+}}=((3^2, 2), (2, 1, 5), (1^5, 3)),\\
&\; \; \; \; \; \; \, (x, y)=(4, 4)\; \; {\bf d^{+}}=((3^2, 2), (2^2, 4), (1^4, 4)),\\ 
&\; \; \; \; \; \; \, (x, y)=(4, 5)\; \; {\bf d^{+}}=((4, 3, 2), (2^2, 5), (1^5, 4)),\\ 
&\; \; \; \; \; \; \, (x, y)=(4, 6)\; \; {\bf d^{+}}=((4^2, 2), (2^2, 6), (1^6, 4)),\\ 
&(5)\; \; (x, y)=(x, 2x)\; \; {\bf d^{+}}=((x^3), (x-1, 1, 2x), (1^{2x}, x))\; x\geq 3, \\
&\; \; \; \; \; \; \, (x, y)=(x, 2x-1)\; \; {\bf d^{+}}=((x^2, x-1), (x-1, 1, 2x-1), (1^{2x-1}, x))\\
&\; \; \; \; \; \; \, x\geq 4, \\
&\; \; \; \; \; \; \, (x, y)=(x, 2x-2)\; \; {\bf d^{+}}=((x, (x-1)^2), (x-1, 1, 2x-2), (1^{2x-2}, x))\\
&\; \; \; \; \; \; \, x\geq 4, \\
&\; \; \; \; \; \; \, (x, y)=(x, 2x-3)\; \; {\bf d^{+}}=(((x-1)^3), (x-1, 1, 2x-3), (1^{2x-3}, x))\\
&\; \; \; \; \; \; \, x\geq 4, \\
&(6)\; \; (q=5)\; \; (x, y)=(4, 5)\; \; {\bf d^{+}}=((3^3), (2^2, 5), (2, 1^3, 4)),\\ 
&\; \; \; \; \; \; \, \; \; \; \; \; \; \; \; \; \; \; \; \; (x, y)=(5, 6)\; \; {\bf d^{+}}=((4^2, 3), (3, 2, 6), (2^2, 1^2, 5)),\\
&(7)\; \; (x, y)=(1, 3)\; \; {\bf d^{+}}=((1^4), (1, 3), (1^3, 1)),\\ 
&\; \; \; \; \; \; \, (x, y)=(x, 2x-2)\; \; {\bf d^{+}}=(((x-1)^3, 1), (x, 2x-2), (1^{2x-2}, x))\; x\geq 2,\\ 
&\; \; \; \; \; \; \, (x, y)=(x, 2x-1)\; \; {\bf d^{+}}=((x, (x-1)^2, 1), (x, 2x-1), (1^{2x-1}, x))\; x\geq 2,\\ 
\end{align*}
\begin{align*}
&\; \; \; \; \; \; \, (x, y)=(x, 2x)\; \; {\bf d^{+}}=((x^2, x-1, 1), (x, 2x), (1^{2x}, x))\; x\geq 2,\\ 
&\; \; \; \; \; \; \, (x, y)=(x, 2x+1)\; \; {\bf d^{+}}=((x^3, 1), (x, 2x+1), (1^{2x+1}, x))\; x\geq 2,\\ 
&(8)\; \; (x, y)=(2, 6)\; \; {\bf d^{+}}=((2^4), (2, 6), (1^6, 2)),\\ 
&(9)\; \; (q=5)\; \; (x, y)=(3, 5)\; \; {\bf d^{+}}=((2^4), (3, 5), (2, 1^3, 3)),\\ 
&\; \; \; \; \; \; \, \; \; \; \; \; \; \; \; \; \; \; \; \; (x, y)=(4, 6)\; \; {\bf d^{+}}=((3^2, 2^2), (4, 6), (2^2, 1^2, 4)),\\
&(10)\; \; (x, y)=(2, 4)\; \; {\bf d^{+}}=((1^6), (2, 4), (2^2, 2)),\\
&\; \; \; \; \; \; \, \; \, (x, y)=(3, 3)\; \; {\bf d^{+}}=((1^6), (3, 3), (2, 1, 3)),\\
&\; \; \; \; \; \; \, \; \, (x, y)=(3, 4)\; \; {\bf d^{+}}=((2, 1^5), (3, 4), (2^2, 3)),\\ 
&\; \; \; \; \; \; \, \; \, (x, y)=(4, 5)\; \; {\bf d^{+}}=((2^3, 1^3), (4, 5), (3, 2, 4)).\\
\end{align*}
Here ${\bf d^{+}}$ in the cases (1) $(x, y)=(1, 1)$, (2) $(x, y)=(1, 2)$, (7) $(x, y)=(1, 3)$, and (10) $(x, y)=(2, 4), (3, 3)$ is of finite type in $\mathscr{F}$. 
The reason for this overlap here is that the method of giving the list in \cite{mwz1999} is different from the method used here. 
A simple calculation shows that ${\bf d^{+}}$ satisfies $Q({\bf d^{+}})=1$ except for (6) $(x, y)=(5, 6)$, (9) $(x, y)=(4, 6)$, and (10) $(x,y)=(4, 5)$, so the 
left-to-right proof is now complete. 
The converse is clear from Theorem \ref{sec:3.8} .
\end{proof}

\begin{thm}
Let ${\bf (a, b, c)}\in {\Lambda}^{J}$ is of finite type in $\mathscr{J}$. 
Then there are a natural bijection between $GL_{|{\bf a}|}$-orbits in ${\rm Jl}_{\bf (a, b, c)}(V)$ and families $M=(m_{\bf d})$ of nonnegative integers satisfying 
\[
\sum_{\bf d} (m_{\bf d}){\bf d}={\bf (a, b, c)}
\]
and indexed by ${\bf d}\in {\Lambda}^{J}$, 
where ${\bf d}$ is of finite type in $\mathscr{J}$ and $Q({\bf d})=1$.
\end{thm}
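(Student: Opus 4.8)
The plan is to reduce the statement to the Krull--Schmidt decomposition of objects of $\mathscr{J}$ and then to identify precisely which indecomposable summands can occur. Recall that the excerpt has already established a bijection between $GL(V)$-orbits on ${\rm Jl}_{\bf (a,b,c)}(V)$ and isomorphism classes of objects $F\in\mathscr{J}$ with dimension vector ${\bf (a,b,c)}$, so it suffices to produce a bijection between such isomorphism classes and the families $M=(m_{\bf d})$ described. The map I would define sends the class of $F$ to its vector of Krull--Schmidt multiplicities: writing $F\cong\bigoplus_{\bf d}({\cal{I}}_{\bf d})^{\oplus m_{\bf d}}$ as a direct sum of indecomposables of $\mathscr{J}$, unique up to order and isomorphism by the Krull--Schmidt theorem (applied inside $\mathscr{J}$, as the paper notes that this decomposition takes place within $\mathscr{J}$), I set $m_{\bf d}$ to be the multiplicity of the indecomposable with dimension vector ${\bf d}$.

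First I would verify that the indices ${\bf d}$ occurring with $m_{\bf d}>0$ range exactly over the finite-type dimension vectors with $Q({\bf d})=1$. Each such ${\bf d}$ is the dimension vector of an indecomposable summand of $F$, hence a summand of ${\bf (a,b,c)}$ on $\mathscr{J}$; since ${\bf (a,b,c)}$ is of finite type, Proposition \ref{sec:3.6} gives $Q({\bf d})\ge 1$, and Proposition \ref{sec:2.1} then forces $Q({\bf d})=1$ together with the uniqueness of the indecomposable ${\cal{I}}_{\bf d}$ with that dimension vector. Being a summand of a finite-type vector, ${\bf d}$ is itself of finite type. Counting dimension vectors in the decomposition yields $\sum_{\bf d}m_{\bf d}{\bf d}={\bf (a,b,c)}$, so the assignment lands in the stated index set and satisfies the required constraint. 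Well-definedness and injectivity both follow from the uniqueness clause of Krull--Schmidt: isomorphic objects share the same multiplicities, and two objects with the same multiplicity vector are isomorphic.

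For surjectivity I would construct the inverse explicitly. Given a family $M=(m_{\bf d})$ supported on finite-type ${\bf d}$ with $Q({\bf d})=1$ and satisfying $\sum_{\bf d}m_{\bf d}{\bf d}={\bf (a,b,c)}$, every ${\bf d}$ with $m_{\bf d}>0$ is a summand of ${\bf (a,b,c)}$, so by Propositions \ref{sec:3.5} and \ref{sec:2.1} there is a unique indecomposable ${\cal{I}}_{\bf d}\in\mathscr{J}$ with dimension vector ${\bf d}$. Form $F:=\bigoplus_{\bf d}({\cal{I}}_{\bf d})^{\oplus m_{\bf d}}$. Its dimension vector is $\sum_{\bf d}m_{\bf d}{\bf d}={\bf (a,b,c)}$, and I would check that $F\in\mathscr{J}$: direct sums preserve injectivity of all the structure maps and preserve the splitting $\Im\alpha\oplus\Im\beta=V$, so $\mathscr{J}$ is closed under finite direct sums. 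Thus $F$ represents an isomorphism class in $\mathscr{J}$ with dimension vector ${\bf (a,b,c)}$, i.e.\ a $GL(V)$-orbit, and its multiplicity vector is again $M$ by Krull--Schmidt. Hence the two maps are mutually inverse.

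The hard part will be the bookkeeping that pins down the index set exactly, rather than the formal bijection, which is essentially immediate from Krull--Schmidt. I expect the main care to be needed in two places. First, confirming that $\mathscr{J}$ is closed under direct summands, so that each indecomposable summand of $F$ again lies in $\mathscr{J}$ and therefore has a genuine representative in some ${\rm Jl}_{\bf d}(V)$; this closure is what licenses the application of Proposition \ref{sec:2.1} to the summand dimension vectors. Second, ruling out indices ${\bf d}$ that are of finite type with $Q({\bf d})=1$ but are not summands of ${\bf (a,b,c)}$: this is automatic, since the constraint $\sum_{\bf d}m_{\bf d}{\bf d}={\bf (a,b,c)}$ together with the semigroup property of ${\Lambda}^{J}$ forces $m_{\bf d}=0$ for any ${\bf d}$ that is not a summand.
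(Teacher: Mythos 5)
Your proposal is correct and follows essentially the same route as the paper, which simply cites Propositions \ref{sec:2.1}, \ref{sec:3.5}, and \ref{sec:3.6} without spelling out the Krull--Schmidt bookkeeping; your write-up supplies exactly the details (multiplicity vector as the bijection, Proposition \ref{sec:3.6} to get $Q({\bf d})\ge 1$ for summands, Proposition \ref{sec:2.1} for uniqueness, Proposition \ref{sec:3.5} for existence) that the paper leaves implicit.
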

\begin{proof}
It is clear from Proposition \ref{sec:2.1}  and Proposition \ref{sec:3.5}  and Proposition \ref{sec:3.6}.
\end{proof}

\section{Double Flag Varieties}
Now introduce the following notation:\\
For ${\bf d=(a, b, c)=(a, (b'}, b_{p}), ({\bf c'}, c_{q}))\in {\Lambda}^{J}$, 
\[
{\rm Dl}_{\bf d}(V_2\bigoplus V_3):={\rm Fl}_{\bf a}(V_1)\times {\rm Fl}_{\bf b'}(V_2)\times {\rm Fl}_{\bf c'}(V_3), 
\]
 where $\dim{V_2}=c_{q}$, $\dim{V_3}=b_{p}$, and $V_1=V_2\bigoplus V_3$.\\
${\rm Dl}_{\bf d}(V_2\bigoplus V_3)$ is called the Double flag variety.

\begin{definition}
A dimension vector ${\bf d=(a, b, c)}\in {\Lambda}^{J}$ is called finite type in $\mathscr{D}$ if the number of $GL(V_2)\times GL(V_3)$-orbits in ${\rm Dl}_{\bf d}(V_2\bigoplus V_3)$ is finite.
\end{definition}

\begin{definition}
Define the {\bf Double flag category} ${\mathscr{D}}_{r, p, q}$ as follows$:$ \\
${\mathscr{D}}_{r, p, q}$ is a full sub-category of ${\mathscr{J}}_{r, p, q}$, whose objects are given in the following forms of $(V_2\bigoplus V_3, A', B', C')\in {\mathscr{J}}_{r, p, q}$$:$\\
\[
\xymatrix@=10pt{
 & & & & V_2 \ar[dl]_{\begin{pmatrix} 1\\ 0\\ \end{pmatrix} } & B_{p-1} \ar[l] & \cdots \ar[l] & B_1 \ar[l] \\
 A_1  \ar[r]  & \cdots \ar[r] & A_{r-1} \ar[r] & V_1=V_2\bigoplus V_3  \\
 & & & & V_3 \ar[ul]^{\begin{pmatrix} 0\\ 1\\ \end{pmatrix} } & C_{q-1} \ar[l] & \cdots \ar[l] & C_1 \ar[l]
}
\]
 where $(A, B, C)$ is a triple of flag in $(V_1, V_2, V_3)$ belong to ${\rm Dl}_{\bf d}(V_2\bigoplus V_3)$ 
for some ${\bf d}:=({\bf a}, ({\bf b}, \dim{V_3} ), ({\bf c}, \dim{V_2} ))\in {\Lambda}^{J}$. 
\end{definition}
When there is no risk of ambiguity, we drop the subscripts and write ${\mathscr{D}}$.\\

Then by embedding ${\mathscr{D}}_{r, p, q}$ in $\rep{\k Q_{r, p, q}}$, 
the object of ${\mathscr{D}}_{r, p, q}$ is directly decomposed on $\rep{\k Q_{r, p, q}}$ except for the order and the isomorphism, 
by applying Krull-Schmidt Theorem\cite{krull}. This decomposition is also a natural decomposition on ${\mathscr{D}}_{r, p, q}$.

Then the following are equivalent for $(A, B, C), (A', B', C')\in {\rm Dl}_{\bf d}(V_2\bigoplus V_3)$: 
\begin{align*}
&(1)\; (GL(V_2)\times GL(V_3))(A, B, C)=(GL(V_2)\times GL(V_3))(A', B', C') \\
&(2)\; (V_2\bigoplus V_3, A, B, C)\cong (V_2\bigoplus V_3, A', B', C')\; on\; {\mathscr{D}}
\end{align*}


Therefore we get the following :\\
(1) $\# \{ GL(V_2)\times GL(V_3)(A, B, C)\, |\, (A, B, C)\in {\rm Dl}_{\bf d}(V_2\bigoplus V_3)\} =\# \{ $isomorphism classes of $F_{\bf d}\in {\mathscr{D}}\} $.\\
(2) A concrete description of the orbit decomposition of ${\rm Dl}_{\bf d}(V_2\bigoplus V_3)$ can be given by finding all the isomorphism classes of $F_{\bf d}$. \\

Now we can translate the problem of the number and description of $GL(V_2)\times GL(V_3)$-orbits of ${\rm Dl}_{\bf d}(V_2\bigoplus V_3)$ into the problem of finding the isomorphism classes of the representation of the quiver.

\begin{prop}\label{sec:4.3} 
${\mathscr{J}}_{r, p, q}$ and ${\mathscr{D}}_{r, p, q}$ are equivalent categories.
\end{prop}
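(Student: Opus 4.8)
The plan is to exhibit the inclusion $\mathscr{D}_{r,p,q}\hookrightarrow\mathscr{J}_{r,p,q}$ as an equivalence by checking that it is fully faithful and essentially surjective. Since $\mathscr{D}_{r,p,q}$ is defined as a \emph{full} subcategory of $\mathscr{J}_{r,p,q}$, the inclusion is automatically full and faithful, so the whole content of the proposition is the essential surjectivity: every object of $\mathscr{J}_{r,p,q}$ must be shown isomorphic, as a representation of $Q_{r,p,q}$, to an object lying in $\mathscr{D}_{r,p,q}$. Once this is in hand, the standard principle that a fully faithful, essentially surjective functor is an equivalence closes the argument.

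The engine of essential surjectivity is the defining condition $\Im\alpha\oplus\Im\beta=V$ of $\mathscr{J}_{r,p,q}$. Given an object $F=(V,A,B,C)\in\mathscr{J}_{r,p,q}$, I would set $V_2:=\Im\alpha$ and $V_3:=\Im\beta$, so that $V=V_2\oplus V_3$ as an internal direct sum. Because the branch maps $\alpha$ and $\beta$ are injective, they restrict to isomorphisms $B_{p-1}\xrightarrow{\sim}V_2$ and $C_{q-1}\xrightarrow{\sim}V_3$ onto their images. I would then transport $F$ along these two isomorphisms: keep the central space $V$ and every other vertex and map unchanged, but replace the two branch-end spaces by $V_2$ and $V_3$, composing the incoming maps with $\alpha$ and $\beta$ as needed. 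In the transported object the two maps into the central vertex become exactly the canonical inclusions $\begin{pmatrix}1\\0\end{pmatrix}\colon V_2\hookrightarrow V_2\oplus V_3$ and $\begin{pmatrix}0\\1\end{pmatrix}\colon V_3\hookrightarrow V_2\oplus V_3$ prescribed in the definition of $\mathscr{D}_{r,p,q}$, while the branch flags are carried isomorphically onto honest flags inside $V_2$ and $V_3$.

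Concretely, I would package this as an explicit morphism $\phi\colon F\to F'$ in $\rep{\k Q_{r,p,q}}$, taking $\phi$ to be the identity at every vertex except the two branch-ends, where $\phi$ is $\alpha$ (respectively $\beta$) regarded as an isomorphism onto $V_2$ (respectively $V_3$). The only naturality squares that are not immediate are those attached to the arrows $\alpha,\beta$ into the central vertex and to the last arrows of the two branches; both commute by the very definition of the transported maps, so $\phi$ is an isomorphism and $F'\in\mathscr{D}_{r,p,q}$. This produces $F\cong F'$ with $F'$ in the subcategory, which is exactly essential surjectivity.

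The step I expect to demand the most care is not any single computation but the bookkeeping of the transport: one must confirm that after replacing $B_{p-1}$ by $V_2$ and $C_{q-1}$ by $V_3$ all the injectivity of the remaining arrows, and the direct-sum decomposition at the central vertex, are preserved, so that $F'$ genuinely meets the hypotheses of $\mathscr{D}_{r,p,q}$ (the incoming maps really are the prescribed inclusions, and the branch data really are flags in $V_2$ and $V_3$ of the correct types). Since the inclusion being a full subcategory makes the full and faithful halves free, the entire crux of the proof is contained in the splitting $V=\Im\alpha\oplus\Im\beta$, which is precisely the extra condition that distinguishes $\mathscr{J}_{r,p,q}$ inside $\rep{\k Q_{r,p,q}}$.
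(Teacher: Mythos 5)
Your proof is correct and takes essentially the same route as the paper: the paper's functor $F\colon{\mathscr{J}}_{r,p,q}\to{\mathscr{D}}_{r,p,q}$ is exactly your transport (replace the branch ends $B_{p-1}$, $C_{q-1}$ by $\Im\alpha$, $\Im\beta$, use the splitting $V=\Im\alpha\oplus\Im\beta$ to turn the two maps into the canonical inclusions, and observe $J\cong F(J)$), which is precisely your essential-surjectivity step. Packaging the argument as ``the inclusion of the full subcategory is fully faithful, so it suffices to check essential surjectivity'' is an equivalent and slightly tidier formulation of the same idea.
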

\begin{proof}
Define functor $F:{\mathscr{J}}_{r, p, q} \longrightarrow {\mathscr{D}}_{r, p, q}$ as follows, 
for the following objects \\
\[
\xymatrix{
& & & & & B_{p-1} \ar[dl]_{{\alpha}_{p-1}} & \cdots \ar[l] & B_1 \ar[l] \\
J : & A_1  \ar[r]  & \cdots \ar[r] & A_{r-1} \ar[r] & V  \\
& & & & & C_{q-1} \ar[ul]^{{\beta}_{q-1}} & \cdots \ar[l] & C_1 \ar[l] 
}
\]
of ${\mathscr{J}}_{r, p, q}$. 
Functor $F$ transfers objects and morphisms to the branches $B$ and $C$ only, without changing the branch of $A$, as follows: \\
\[
\xymatrix{
B : & V \ar[d]^f & B_{p-1} \ar[l]_{{\alpha}_{p-1}} \ar[d]^{f_{p-1}} & B_{p-2} \ar[l]_{{\alpha}_{p-2}} \ar[d]^{f_{p-2}} & \cdots \ar[l] & B_1 \ar[l]_{{\alpha}_{1}} \ar[d]^{f_{1}} \\
B' : & V' & B'_{p-1} \ar[l]_{{{\alpha}'}_{p-1}} & B'_{p-2} \ar[l]_{{{\alpha}'}_{p-2}} \ar[d]^{F} & \cdots \ar[l] & B'_1 \ar[l]_{{{\alpha}'}_{1}} \\
F(B) : & \Im{{\alpha}_{p-1}}\bigoplus \Im{{\beta}_{q-1}} \ar[d]^f & \Im{{\alpha}_{p-1}} \ar[l]_{\begin{pmatrix} 1\\ 0\\ \end{pmatrix} } \ar[d]^{f|_{\Im{{\alpha}_{p-1}}}} & B_{p-2} \ar[l]_{{\alpha}_{p-1}{\alpha}_{p-2}} \ar[d]^{f_{p-2}} & \cdots \ar[l] & B_1 \ar[l]_{{\alpha}_{1}} \ar[d]^{f_{1}} \\
F(B') : & \Im{{{\alpha}'}_{p-1}}\bigoplus \Im{{{\beta}'}_{q-1}} & \Im{{{\alpha}'}_{p-1}} \ar[l]^{\begin{pmatrix} 1\\ 0\\ \end{pmatrix} } & B'_{p-2} \ar[l]_{{{\alpha}'}_{p-1}{{\alpha}'}_{p-2}} & \cdots \ar[l] & B'_1. \ar[l]_{{{\alpha}'}_{1}} 
}
\]
Similarly for $C$.\\
Then $J$ and $F(J)$ are isomorphic as representations of a quiver, and it is easy to see that $F$ is an equivalence of categories. 
\end{proof}

Therefore, from Proposition \ref{sec:4.3} , we immediately obtain the following Theorem \ref{sec:4.4}  and Theorem \ref{sec:4.5}  by appropriating the result of section 2.

\begin{thm}\label{sec:4.4}
${\bf d}\in {\Lambda}^{J}$ is of finite type in $\mathscr{D}$ if and only if ${\bf d}\in {\Lambda}^{J}$ is called finite type in $\mathscr{J}$. 
\end{thm}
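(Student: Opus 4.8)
The plan is to deduce the statement from the categorical equivalence of Proposition~\ref{sec:4.3} together with the two orbit-counting correspondences recorded just before each definition of finite type. Concretely, finiteness of the $GL_{|{\bf a}|}$-orbits on ${\rm Jl}_{\bf d}(V)$ was identified with finiteness of the set of isomorphism classes of objects $F_{\bf d}\in\mathscr{J}$, and in the same way finiteness of the $GL(V_2)\times GL(V_3)$-orbits on ${\rm Dl}_{\bf d}(V_2\bigoplus V_3)$ was identified with finiteness of the set of isomorphism classes of objects $F_{\bf d}\in\mathscr{D}$. Hence it suffices to exhibit, for each fixed ${\bf d}\in{\Lambda}^{J}$, a bijection between the isomorphism classes of objects of dimension vector ${\bf d}$ in $\mathscr{J}$ and those in $\mathscr{D}$.

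First I would invoke Proposition~\ref{sec:4.3}, which supplies the equivalence $F\colon\mathscr{J}_{r,p,q}\to\mathscr{D}_{r,p,q}$; any equivalence induces a bijection on the set of all isomorphism classes of objects. What remains is to show that $F$ is compatible with the grading by dimension vector, so that this bijection restricts to one on classes of a fixed ${\bf d}$. Reading off the construction of $F$ in the proof of Proposition~\ref{sec:4.3}, along the $B$-branch the only nodes that change are the central node, where $V$ is replaced by $\Im{\alpha_{p-1}}\bigoplus\Im{\beta_{q-1}}$, and the node $B_{p-1}$, which is replaced by $\Im{\alpha_{p-1}}$; the nodes $B_{p-2},\dots,B_1$ are untouched, and symmetrically on the $C$-branch and on the (unchanged) $A$-branch.

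Because every arrow of an object of $\mathscr{J}$ is injective, one has $\dim\Im{\alpha_{p-1}}=\dim B_{p-1}$ and $\dim\Im{\beta_{q-1}}=\dim C_{q-1}$, while the defining condition $\Im{\alpha}\bigoplus\Im{\beta}=V$ of $\mathscr{J}$ gives $\dim(\Im{\alpha_{p-1}}\bigoplus\Im{\beta_{q-1}})=\dim V$. Thus every coordinate of the dimension vector is preserved, and $F$ restricts to an equivalence between the full subcategory of objects of $\mathscr{J}$ with dimension vector ${\bf d}$ and that of objects of $\mathscr{D}$ with dimension vector ${\bf d}$. In particular the numbers of isomorphism classes of $F_{\bf d}$ in $\mathscr{J}$ and in $\mathscr{D}$ agree.

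Feeding this equality of counts through the two orbit-counting correspondences then shows that ${\rm Jl}_{\bf d}(V)$ has finitely many $GL_{|{\bf a}|}$-orbits exactly when ${\rm Dl}_{\bf d}(V_2\bigoplus V_3)$ has finitely many $GL(V_2)\times GL(V_3)$-orbits, which is the asserted equivalence of finite type in $\mathscr{J}$ and finite type in $\mathscr{D}$. The one point requiring genuine care is the dimension-vector bookkeeping: one must confirm that replacing $B_{p-1}$ by $\Im{\alpha_{p-1}}$ and $V$ by $\Im{\alpha_{p-1}}\bigoplus\Im{\beta_{q-1}}$ shifts no entry of ${\bf d}$. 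Everything else is a formal consequence of the categorical equivalence combined with the orbit correspondences already established.
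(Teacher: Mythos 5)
Your proposal is correct and follows the same route as the paper: the paper simply states that Theorem~\ref{sec:4.4} follows immediately from the equivalence of Proposition~\ref{sec:4.3} together with the orbit--isomorphism-class correspondences, and your argument is exactly that, with the (worthwhile but routine) extra verification that the functor $F$ preserves dimension vectors because all arrows are injective and $\Im\alpha\bigoplus\Im\beta=V$.
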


\begin{thm}\label{sec:4.5}
Let ${\bf (a, b, c)}\in {\Lambda}^{J}$ is of finite type in $\mathscr{D}$. 
Then there are a natural bijection between $GL(V_2)\times GL(V_3)$-orbits in ${\rm Dl}_{\bf (a, b, c)}(V_2\bigoplus V_3)$ and families $M=(m_{\bf d})$ of nonnegative integers satisfying 
\[
\sum_{\bf d} (m_{\bf d}){\bf d}={\bf (a, b, c)}
\]
and indexed by ${\bf d}\in {\Lambda}^{J}$, 
where ${\bf d}$ is of finite type in $\mathscr{D}$ and $Q({\bf d})=1$.
\end{thm}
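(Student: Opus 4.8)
The plan is to transport the analogous statement for the joint flag variety (the counterpart theorem for $\mathscr{J}$ established earlier) across the category equivalence $F\colon\mathscr{J}_{r,p,q}\to\mathscr{D}_{r,p,q}$ of Proposition~\ref{sec:4.3}, using the dictionary already set up for $\mathscr{D}$ that identifies $GL(V_2)\times GL(V_3)$-orbits on ${\rm Dl}_{\bf(a,b,c)}(V_2\ds V_3)$ with isomorphism classes of objects $F_{\bf(a,b,c)}\in\mathscr{D}$. Thus the whole problem reduces to counting isomorphism classes in $\mathscr{D}$ with the prescribed dimension vector, which is exactly what the equivalence $F$ lets us pull back to $\mathscr{J}$.

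First I would record that, being an equivalence of categories, $F$ induces a bijection on isomorphism classes, carries indecomposables to indecomposables, and (since both categories are Krull--Schmidt, as already noted) matches the multisets of indecomposable summands. The one point that genuinely needs checking is that $F$ is compatible with the grading by $\Lambda^J$: one verifies directly from the definitions of ${\rm Jl}_{\bf d}$ and ${\rm Dl}_{\bf d}$ that an object represented by ${\bf d}=({\bf a},{\bf b},{\bf c})$ in $\mathscr{J}$ is sent by $F$ to an object represented by the same ${\bf d}$ in $\mathscr{D}$; here the relations $b_p=|{\bf c'}|=\dim V_3$ and $c_q=|{\bf b'}|=\dim V_2=\dim\Im\alpha_{p-1}$ show that replacing $B_{p-1}$ by $\Im\alpha_{p-1}$ (and $C_{q-1}$ by $\Im\beta_{q-1}$) leaves the combinatorial data in $\Lambda^J$ unchanged. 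Since the Tits form $Q({\bf d})$ is an invariant of the dimension vector and the Schur/non-Schur distinction is an invariant of the isomorphism class, both are preserved by $F$.

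With this compatibility in hand, the proof runs as follows. By Theorem~\ref{sec:4.4}, ${\bf(a,b,c)}$ is of finite type in $\mathscr{D}$ if and only if it is of finite type in $\mathscr{J}$, and likewise for every summand ${\bf d}$; so the indexing sets ``${\bf d}$ of finite type with $Q({\bf d})=1$'' agree in the two categories. Now Krull--Schmidt gives each object $F_{\bf(a,b,c)}\in\mathscr{D}$ a unique decomposition into indecomposables, Proposition~\ref{sec:3.6} guarantees (under the finite-type hypothesis) that every indecomposable summand has $Q({\bf d})\ge1$, Proposition~\ref{sec:3.5} forces each such summand to be a Schur indecomposable with $Q({\bf d})=1$, and Proposition~\ref{sec:2.1} says an indecomposable with $Q({\bf d})=1$ is uniquely determined by its dimension vector ${\bf d}$. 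Reading a decomposition off as the multiplicity function $m_{\bf d}$ of its indecomposable types therefore yields the desired bijection between isomorphism classes of $F_{\bf(a,b,c)}$ and families $(m_{\bf d})$ with $\sum_{\bf d}m_{\bf d}{\bf d}={\bf(a,b,c)}$, the sum ranging over the stated index set.

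I expect the only real obstacle to be the grading-compatibility check in the second paragraph: one must be sure that the modification $F$ performs at the top of the $B$- and $C$-branches neither changes the dimension vector in $\Lambda^J$ nor creates or destroys summands, so that the index set and the constraint $\sum_{\bf d}m_{\bf d}{\bf d}={\bf(a,b,c)}$ transport faithfully. Everything else is a formal consequence of the equivalence $F$ together with Theorem~\ref{sec:4.4} and the three cited propositions.
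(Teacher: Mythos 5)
Your proposal matches the paper's own argument: the paper derives Theorem~\ref{sec:4.5} directly from the category equivalence of Proposition~\ref{sec:4.3} by transporting the corresponding bijection theorem for $\mathscr{J}$ from Section~2, exactly as you do. Your explicit verification that $F$ preserves the $\Lambda^J$-grading and the Krull--Schmidt decomposition is a worthwhile elaboration of a step the paper leaves implicit, but the route is the same.
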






\section*{Acknowledgments}
The author thanks Hiroyuki Ochiai for helpful references and discussions.









\end{document}